\newtheorem{thm}{Theorem}[section]
\newtheorem{prop}[thm]{Proposition}
\newtheorem{lem}[thm]{Lemma}
\theoremstyle{definition}
\newtheorem{exam}[thm]{Example}
\newtheorem{rmk}[thm]{Remark}
\theoremstyle{remark}
\newcommand\inv{\operatorname{inv}}
\newcommand\sgn{\operatorname{sgn}}
\newcommand\col{\operatorname{col}}
\newcommand\Des{\operatorname{Des}}
\newcommand\SSYT{\operatorname{SSYT}}
\newcommand\Par{\operatorname{Par}}
\newcommand\plac{\operatorname{plac}}
\newcommand\QQ{\mathbb{Q}}
\newcommand\NN{\mathbb{N}}
\newcommand\ZZ{\mathbb{Z}}
\newcommand\Sym{\mathsf{Sym}}
\newcommand\UU{\mathcal{U}}
\newcommand\II{\mathcal{I}}
\newcommand\calN{\mathcal{N}}
\newcommand\xx{\mathbf{x}}
\newcommand\yy{\mathbf{y}}
\newcommand\uu{\mathbf{u}}
\newcommand\ee{\mathfrak{e}}
\newcommand\hh{\mathfrak{h}}
\newcommand\sch{\mathfrak{J}}
\newcommand\pp{\mathfrak{p}}
\newcommand\mono{\mathfrak{m}}
\newcommand\ff{\mathfrak{f}}
\newcommand\SYM{\mathfrak{S}}
\newcommand\ww{\mathsf{w}}
\newcommand\vv{\mathsf{v}}
\newcommand\equivclass[1]{#1/{\sim}}
\newcommand\qand{\quad\mbox{and}\quad}
\title{Noncommutative symmetric functions and skewing operators}
\author{Byung-Hak Hwang}
\email{byunghakhwang@gmail.com}
\keywords{Noncommutative symmetric function, skewing operator,
Littlewood--Richardson rule, chrormatic quasisymmetric functions}
\begin{document}
\begin{abstract}
  Skewing operators play a central role in the symmetric function theory
  because of the importance of the product structure of the symmetric function
  space.
  The theory of noncommutative symmetric functions is a useful tool for studying
  expansions of a given symmetric function in terms of various bases.
  In this paper, we establish a further development of the theory
  for studying skewing operators.
  Using this machinery, we are able to easily reproduce the Littlewood--Richardson
  rule, and provide recurrence relations for chromatic quasisymmetric functions,
  which generalizes Harada--Precup's recurrence.
\end{abstract}

\maketitle

\section{Introduction} \label{sec:intro}
The theory of symmetric functions plays a significant role in algebra combinatorics.
Although the definition of symmetric functions is rooted in combinatorics,
symmetric functions appear in, beyond combinatorics, many other fields, e.g.,
representation theory, algebraic geometry, probability theory, and more.
Symmetric functions are formal power series on infinite variables that are invariant
under permuting the variables. Therefore, a linear sum of symmetric functions,
and a product of symmetric functions are also symmetric functions.
In other words, they carry an algebra structure.

Whenever a novel class of symmetric functions is introduced,
a natural question for the linear structure arises:
how can we expand the symmetric functions in terms of several symmetric function
bases?
The reason why this question is crucial is that coefficients in such expansions hold
the potential to have a meaning in diverse area,
e.g., the dimensions of specific modules, multiplicities of irreducible
representations, or intersection numbers of irreducible varieties.

Furthermore, the significance of the space of symmetric functions extends beyond its
linear structure. Since the space is isomorphic to other spaces appearing in various
fields as an algebra,
the product on the symmetric function space has a meaningful interpretation
on opposite sides, so the product structure becomes equally important.
Thus the multiplicative property of symmetric functions have been studied
in the literature.
When an algebra is equipped with an inner product, a efficient way to study
the multiplication structure is to consider the adjoint of the
multiplication with respect to the inner product.
In the symmetric function space, for a symmetric function \( f(\xx) \),
the skewing operator \( f^\perp \) is defined as the adjoint of
the multiplication by \( f(\xx) \).
(For an explicit definition, see Section~\ref{sec:skewing}.)
Hence, these skewing operators play an important role, and have been studied
extensively.

Numerous tools have been developed to expand a given symmetric function into several
bases, and find a combinatorial interpretation for coefficient in the expansions.
One such tool that we will explore in this paper is the theory of
noncommutative symmetric functions. 
In their seminal paper~\cite{FG98}, Fomin and Greene introduced noncommutative Schur
functions for studying the Schur expansion of a given symmetric function.
Using this machinery, they reproduced the Littlewood--Richardson rule,
and established the Schur positivity of Stanley symmetric functions.
Building upon this foundation, Blasiak and Fomin~\cite{BF17} developed a more
general algebraic framework for the noncommutative symmetric function theory.

In the current work, we present a further development of the noncommutative symmetric
function theory.
More explicitly, we take one more step from the foundation in \cite{BF17}
to apply the theory to studying skewing operators.
We provide a way to calculate a given symmetric function `skewed' by certain
symmetric functions via the theory of noncommutative symmetric functions
(Theorem~\ref{thm:skew_Omega}).

The remainder of the paper is organized as follows.
In Section~\ref{sec:skewing}, we recast some basic of the symmetric function theory,
and define skew operators. We also present a variant~\eqref{eq:cauchy_product_skew}
of the Cauchy product formula.
Section~\ref{sec:noncomm} gives a brief review of the noncommutative symmetric
function theory (we follow notations from \cite{BF17}), and developes the theory
to make applicable to skewing operators.
In Section~\ref{sec:application}, we apply the machinery developed in the previous
section to complete homogeneous symmetric functions, Schur functions, and
chromatic quasisymmetric functions. For the last case, we need a generalization of
noncommutative symmetric functions, introduced in \cite{Hwa22}.
As results, we reproduce the Littlewood--Richardson rule, and provide two
recurrence relations (Theorems~\ref{thm:recur_e_perp} and \ref{thm:recur_p_perp})
for the chromatic quasisymmetric functions.
One of the relations generalizes the Harada--Precup recurrence~\cite{HP19}.

\section{Skewing operators} \label{sec:skewing}
In this section, we present some well-known facts in the theory of symmetric
functions, which will be used in next sections. In addition, we give a variant
of the Cauchy product formula concerning skewing operators.

We assume that the reader is familiar with the basics of the theory of symmetric
and quasisymmetric functions. Therefore, we will only present some well-known
facts that we will use, without proofs. Their proofs can be found in
\cite[Chapter~7]{Sta99}.

Let $\xx=(x_1, x_2, \dots)$ be a set of formal variables that commute with each
other.
Let \( \Sym_n \) be the \( \QQ \)-space of symmetric functions of homogeneous
degree \( n \), and \( \Sym = \bigoplus_{n\ge 0} \Sym_n \).
We denote by \( \Par \) the set of integer partitions.
For \( k\ge 1 \), the \emph{elementary symmetric function} \( e_k(\xx) \)
is defined as
\[
  e_k(\xx) = \sum_{i_1>\dots > i_k} x_{i_1}\cdots x_{i_k},
\]
and for a partition \( \lambda=(\lambda_1,\dots,\lambda_\ell) \),
define \( e_\lambda(\xx)=e_{\lambda_1}(\xx)\cdots e_{\lambda_\ell}(\xx) \).
The following is a crucial fact in the theory of symmetric functions.
\begin{thm}[{\cite[Theorem~7.4.4]{Sta99}}]
  The elementary symmetric functions \( e_1(\xx), e_2(\xx), \dots \) are
  algebraically independent, and thus \( \{e_\lambda(\xx)\}_{\lambda\in\Par} \)
  forms a basis for \( \Sym \).
  Equivalently, the \( \QQ \)-algebra \( \Sym \) is a free associative commutative
  \( \QQ \)-algebra over \( e_1(\xx), e_2(\xx), \dots \).
\end{thm}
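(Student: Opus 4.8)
The plan is to compare the family $\{e_\lambda(\xx)\}$ with the monomial symmetric functions $\{m_\mu(\xx)\}$, which form a basis of $\Sym$ essentially by definition: the degree-$n$ part $\Sym_n$ is finite-dimensional with basis $\{m_\mu(\xx) : \mu\vdash n\}$, indexed by partitions of $n$, since a symmetric function of degree $n$ is determined by its coefficients and those coefficients are constant on $S_\infty$-orbits of monomials. Concretely, I would first record the combinatorial expansion
\[
  e_\lambda(\xx) = \sum_{\mu} M_{\lambda\mu}\, m_\mu(\xx),
\]
where $M_{\lambda\mu}$ is the number of $0$-$1$ matrices with row sums $\lambda$ and column sums $\mu$; this follows by multiplying out $e_{\lambda_1}(\xx)\cdots e_{\lambda_\ell}(\xx)$ and reading off the coefficient of a fixed monomial of content $\mu$.

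The key step is a triangularity lemma for this transition matrix. Fix $k\ge 1$. In a $0$-$1$ matrix with row sums $\lambda$, the $i$-th row contributes at most $\min(\lambda_i,k)$ ones to the first $k$ columns, so the number of ones in the first $k$ columns is at most $\sum_i \min(\lambda_i,k) = \sum_{j\le k}\lambda'_j$, where $\lambda'$ is the conjugate partition. Hence $\mu_1+\cdots+\mu_k \le \lambda'_1+\cdots+\lambda'_k$ for every $k$, i.e. $M_{\lambda\mu}=0$ unless $\mu\trileq\lambda'$ in dominance order. Moreover $M_{\lambda,\lambda'}=1$: the Young diagram of $\lambda$, read as a $0$-$1$ matrix, has row sums $\lambda$ and column sums $\lambda'$, and equality in the bound above for all $k$ forces the ones in each row to occupy an initial segment of columns, so this matrix is the only one of its kind. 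Thus $e_\lambda(\xx) = m_{\lambda'}(\xx) + \sum_{\mu\vartriangleleft\lambda'} M_{\lambda\mu}\,m_\mu(\xx)$.

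With the lemma in hand the rest is formal. Conjugation $\lambda\mapsto\lambda'$ is an involution on the partitions of $n$ that reverses dominance order, so after reindexing the rows of $(M_{\lambda\mu})_{\lambda,\mu\vdash n}$ by $\lambda\mapsto\lambda'$ the matrix becomes unitriangular with respect to any linear extension of dominance order, hence invertible over $\QQ$ (indeed over $\ZZ$). Therefore $\{e_\lambda(\xx):\lambda\vdash n\}$ is both spanning and linearly independent in $\Sym_n$ for every $n$, and taking the union over $n$ shows $\{e_\lambda(\xx)\}_{\lambda\in\Par}$ is a basis of $\Sym$. Algebraic independence of $e_1(\xx),e_2(\xx),\dots$ is then immediate: a nontrivial polynomial relation would be a nontrivial $\QQ$-linear combination of the distinct monomials $\prod_i e_i(\xx)^{a_i}=e_\lambda(\xx)$, contradicting their linear independence. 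Finally, since every $e_\lambda(\xx)$ is a product of the $e_i(\xx)$, the basis $\{e_\lambda(\xx)\}$ already exhibits $e_1(\xx),e_2(\xx),\dots$ as algebra generators of $\Sym$; combined with algebraic independence, the evaluation map $\QQ[t_1,t_2,\dots]\to\Sym$ sending $t_i\mapsto e_i(\xx)$ is an isomorphism, so $\Sym$ is the free commutative $\QQ$-algebra over $e_1(\xx),e_2(\xx),\dots$.

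The main obstacle is the triangularity lemma, and inside it the verification that the leading coefficient is exactly $1$, i.e. the uniqueness of the extremal $0$-$1$ matrix; everything downstream is bookkeeping with the dominance order and the conjugation involution.
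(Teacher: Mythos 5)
Your proof is correct, and it takes essentially the standard route: the paper itself states this theorem without proof, citing Stanley's Theorem~7.4.4, whose argument is precisely the triangularity you establish for the transition matrix between $\{e_\lambda(\xx)\}$ and the monomial basis $\{m_\mu(\xx)\}$ with respect to dominance order, including the key verification that $M_{\lambda\lambda'}=1$ via uniqueness of the extremal $0$-$1$ matrix. Nothing needs to be added; your argument matches the cited source's proof.
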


We also present other bases and provide some relations with elementary
symmetric functions:
\begin{enumerate}
  \item The \emph{monomial symmetric function} \( m_\lambda(\xx) \) is defined
  to be
  \[
    m_\lambda(\xx) = \sum_{\alpha} x^\alpha,
  \]
  where $\alpha$ ranges over all distinct permutations $\alpha=(\alpha_1, \alpha_2,
  \dots)$ of the entries of $\lambda$ and $x^\alpha = x_1^{\alpha_1} x_2^{\alpha_2}
  \cdots$.
  \item The \emph{complete homogeneous symmetric function} \( h_k(\xx) \) is
  defined to be
  \[
    h_k(\xx) = \sum_{i_1\le \dots \le i_k} x_{i_1}\cdots x_{i_k},
  \]
  and \( h_\lambda(\xx)=h_{\lambda_1}(\xx)\cdots h_{\lambda_\ell}(\xx) \)
  for a partition \( \lambda \). They satisfy the following relation:
  \begin{equation} \label{eq:relation_he}
    h_k(\xx) - e_1(\xx) h_{k-1}(\xx) + \dots + (-1)^k e_k(\xx) = \delta_{k,0}.
  \end{equation}
  \item The \emph{power sum symmetric function} \( p_k(\xx) \) is defined to be
  \[
    p_k(\xx) = \sum_{i} x_i^k,
  \]
  and \( p_\lambda(\xx)=p_{\lambda_1}(\xx)\cdots p_{\lambda_\ell}(\xx) \)
  for a partition \( \lambda \). We also have the following identity:
  \begin{equation} \label{eq:relation_pe}
    p_k(\xx) = e_1(\xx) h_{k-1}(\xx) - 2 e_2(\xx) h_{k-2}(\xx) + \cdots + (-1)^{k-1} k e_k(\xx).
  \end{equation}
  \item The \emph{Schur function} \( s_\lambda(\xx) \) is defined by
  the (dual) Jacobi--Trudi identity: for a partition \( \lambda \),
  \begin{align}
    s_\lambda(\xx)
      &= \det (e_{\lambda'_i+j-i}(\xx))_{i,j=1}^{m} \nonumber \\
      &= \sum_{\sigma\in\SYM_m} \sgn(\sigma)
      e_{\lambda'_1-\sigma_1+1}(\xx) e_{\lambda'_2-\sigma_2+2}(\xx)\cdots
      e_{\lambda'_m-\sigma_m+m}(\xx),  \label{eq:relation_se}
  \end{align}
  where \( m=\lambda_1 \) and \( \lambda' \) is the conjugate of \( \lambda \).
\end{enumerate}

Let $\yy=(y_1,y_2,\dots)$ be another set of commuting variables. Define
\[
  C(\xx,\yy) = \prod_{i,j} \frac{1}{1-x_iy_j} \in\mathbb{Q}[[\xx,\yy]],
\]
called the \emph{Cauchy product}.
Observing the definition carefully, one can deduce
\[
  C(\xx,\yy) = \sum_{\lambda\in\Par} m_\lambda(\xx) h_\lambda(\yy).
\]
The \emph{Hall inner product} \( \langle - , - \rangle \) is a symmetric bilinear
pairing on \( \Sym\times\Sym \) such that the monomial symmetric functions and
the complete homogenous symmetric functions are dual to each other with respect to
the pairing, that is,
\[
  \langle m_\lambda(\xx), h_\mu(\xx) \rangle = \delta_{\lambda\mu}.
\]
Furthermore, we have a deeper relationship between the Cauchy product and the Hall
inner product.
\begin{prop}[{\cite[Lemma~7.9.2]{Sta99}}]
  Let \( \{f_\lambda(\xx)\}_{\lambda\in\Par} \) and \( \{g_\lambda(\xx)\}
  _{\lambda\in\Par} \) be two bases for \( \Sym \). Then
  \[
    C(\xx,\yy) = \sum_{\lambda\in\Par} f_\lambda(\xx) g_\lambda(\yy)
  \]
  if and only if
  \[
    \langle f_\lambda(\xx), g_\mu(\xx) \rangle = \delta_{\lambda\mu}
  \]
  for any \( \lambda,\mu\in\Par \).
\end{prop}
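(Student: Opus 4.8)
The plan is to reduce both directions to a single bookkeeping identity obtained by expanding the Cauchy product in the monomial/complete-homogeneous bases and then in the bases $\{f_\lambda\}$ and $\{g_\lambda\}$. First I would fix the transition matrices: since $\{f_\lambda(\xx)\}$ is a basis for $\Sym$, write $f_\lambda(\xx) = \sum_{\nu} A_{\lambda\nu}\, m_\nu(\xx)$, and since $\{g_\lambda(\xx)\}$ is a basis, write $g_\mu(\yy) = \sum_{\rho} B_{\mu\rho}\, h_\rho(\yy)$, where $A=(A_{\lambda\nu})$ and $B=(B_{\mu\rho})$ are invertible matrices (indexed by partitions of a fixed size in each graded piece, so all sums are finite). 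Because the pairing is defined by $\langle m_\nu, h_\rho\rangle=\delta_{\nu\rho}$ and is bilinear, we immediately get $\langle f_\lambda(\xx), g_\mu(\xx)\rangle = \sum_{\nu} A_{\lambda\nu} B_{\mu\nu} = (A B^{\mathsf T})_{\lambda\mu}$.

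Next I would compute the other side. Using $C(\xx,\yy)=\sum_{\nu} m_\nu(\xx)\,h_\nu(\yy)$ from the excerpt and substituting the expansions,
\[
  \sum_{\lambda} f_\lambda(\xx)\, g_\lambda(\yy)
  = \sum_{\lambda}\sum_{\nu,\rho} A_{\lambda\nu} B_{\lambda\rho}\, m_\nu(\xx)\, h_\rho(\yy)
  = \sum_{\nu,\rho} (A^{\mathsf T} B)_{\nu\rho}\, m_\nu(\xx)\, h_\rho(\yy).
\]
Since the products $\{m_\nu(\xx)\, h_\rho(\yy)\}_{\nu,\rho}$ are linearly independent in $\QQ[[\xx,\yy]]$ (the $m_\nu(\xx)$ are linearly independent, as are the $h_\rho(\yy)$, and they live in separate variable sets), the identity $C(\xx,\yy)=\sum_\lambda f_\lambda(\xx) g_\lambda(\yy)$ holds if and only if $(A^{\mathsf T}B)_{\nu\rho}=\delta_{\nu\rho}$ for all $\nu,\rho$, i.e.\ $A^{\mathsf T} B = I$. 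On the other hand, $\langle f_\lambda(\xx), g_\mu(\xx)\rangle=\delta_{\lambda\mu}$ for all $\lambda,\mu$ says exactly $A B^{\mathsf T} = I$. So the whole statement comes down to the elementary linear-algebra fact that, for square matrices over a field, $A^{\mathsf T}B = I \iff AB^{\mathsf T} = I$ (each says $B^{\mathsf T}$ is the two-sided inverse of $A$).

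I would organize the write-up by first recording the linear independence of $\{m_\nu(\xx)h_\rho(\yy)\}$ and the finiteness of all sums (working degree by degree, since $C$, the $f_\lambda$, $g_\lambda$ are homogeneous), then the two matrix computations above, then the one-line linear-algebra conclusion. The only genuine subtlety — the step I'd expect to need the most care — is justifying that one may equate coefficients of $m_\nu(\xx) h_\rho(\yy)$ on both sides: this requires knowing that within each bidegree $(n,n)$ the family $\{m_\nu(\xx)h_\rho(\yy)\}_{\nu,\rho\vdash n}$ is linearly independent, and that $C(\xx,\yy)$ is supported on bidegrees $(n,n)$ so that cross terms never interfere. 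Everything else is formal. (One could alternatively phrase the whole argument basis-freely: the map $\Sym\otimes\Sym\to\QQ[[\xx,\yy]]$ sending $u\otimes v\mapsto u(\xx)v(\yy)$ is injective in each bidegree, $C$ corresponds to the copairing dual to $\langle-,-\rangle$, and both conditions assert that $\sum f_\lambda\otimes g_\lambda$ equals that copairing; but the matrix version is the most transparent to present.)
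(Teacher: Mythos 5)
Your proof is correct. The paper itself gives no proof of this proposition, citing Stanley's Lemma~7.9.2 instead, and your transition-matrix argument (with \( \langle f_\lambda, g_\mu\rangle = (AB^{\mathsf T})_{\lambda\mu} \), the Cauchy-product condition amounting to \( A^{\mathsf T}B=I \), and the degree-by-degree reduction to finite square matrices so that one-sided inverses are two-sided) is essentially the standard proof given in that reference, with the needed care about linear independence of \( m_\nu(\xx)h_\rho(\yy) \) correctly noted.
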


We are now in a position to define a skewing operator.
Let \( a(\xx) \) be a symmetric function.
The \emph{skewing operator} \( a^\perp \) is a linear map on \( \Sym \)
\[
  a^\perp : f(\xx) \longmapsto a^\perp f(\xx),
\]
where \( a^\perp f(\xx) \) is a unique symmetric function satisfying
\[
  \langle a^\perp f(\xx), g(\xx) \rangle = \langle f(\xx), a(\xx) g(\xx) \rangle
\]
for any \( g(\xx)\in\Sym \).
The terminology `skewing' may have originated from the following concept.
Let \( \lambda \) and \( \mu \) be two partitions with \( \lambda_i \ge \mu_i \)
for all \( i \). Then the \emph{skew Schur function} \( s_{\lambda/\mu}(\xx) \)
is defined to be
\[
  s_{\lambda/\mu}(\xx) = \det (e_{\lambda'_i-\mu'_j-i+j})_{i,j=1}^m,
\]
where \( m=\lambda_1 \). It is highly non-trivial that
\begin{equation} \label{eq:skewing_s=s_skew}
  s_\mu^\perp s_\lambda(\xx) = s_{\lambda/\mu}(\xx).
\end{equation}

Furthermore, let us consider two symmetric function bases
\( \{f_\lambda(\xx)\}_{\lambda\in\Par} \) and
\( \{g_\lambda(\xx)\}_{\lambda\in\Par} \), where they are dual to each other.
Equivalently,
\[
  C(\xx,\yy) = \sum_{\lambda\in\Par} f_\lambda(\xx) g_\lambda(\yy).
\]
Let \( d^\nu_{\lambda\mu} \) stand the structure constant for
\( \{f_\lambda(\xx)\}_{\lambda\in\Par} \), that is,
\[
  f_\lambda(\xx) f_\mu(\xx) = \sum_\nu d^\nu_{\lambda\mu} f_\nu(\xx).
\]
Then we obtain a simple identity about skewing \( g_\lambda(\xx) \)
by \( f_\mu(\xx) \), and have a variant of the Cauchy product formula.
\begin{prop}
  For partitions \( \lambda \) and \( \mu \), we have
  \begin{equation} \label{eq:a_perp=sum_db}
    f_\mu^\perp g_\lambda(\xx) = \sum_{\nu\in\Par} d^\lambda_{\mu\nu} g_\nu(\xx),
  \end{equation}
  and
  \begin{equation} \label{eq:cauchy_product_skew}
    f_\mu(\xx) C(\xx,\yy) = \sum_{\lambda\in\Par} f_\lambda(\xx) f_\mu^\perp g_\lambda(\yy).
  \end{equation}
\end{prop}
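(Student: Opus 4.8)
The plan is to prove the two identities \eqref{eq:a_perp=sum_db} and \eqref{eq:cauchy_product_skew} by unwinding the definitions of the skewing operator and the Hall inner product, using only that $\{f_\lambda(\xx)\}$ and $\{g_\lambda(\xx)\}$ are dual bases. First I would establish \eqref{eq:a_perp=sum_db}. Since $\{g_\nu(\xx)\}_{\nu\in\Par}$ is a basis, it suffices to compute $\langle f_\mu^\perp g_\lambda(\xx),\, f_\kappa(\xx)\rangle$ for every partition $\kappa$ and check it equals $d^\lambda_{\mu\kappa}$. By the defining property of the skewing operator, $\langle f_\mu^\perp g_\lambda(\xx), f_\kappa(\xx)\rangle = \langle g_\lambda(\xx),\, f_\mu(\xx) f_\kappa(\xx)\rangle$. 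Expanding the product $f_\mu(\xx) f_\kappa(\xx) = \sum_\nu d^\nu_{\mu\kappa} f_\nu(\xx)$ and using bilinearity together with duality $\langle g_\lambda(\xx), f_\nu(\xx)\rangle = \delta_{\lambda\nu}$, this collapses to $d^\lambda_{\mu\kappa}$. Comparing with the expansion $f_\mu^\perp g_\lambda(\xx) = \sum_\nu c_\nu g_\nu(\xx)$, which pairs against $f_\kappa(\xx)$ to give $c_\kappa$, we conclude $c_\kappa = d^\lambda_{\mu\kappa}$ for all $\kappa$, which is \eqref{eq:a_perp=sum_db}.

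Next I would derive \eqref{eq:cauchy_product_skew} from \eqref{eq:a_perp=sum_db} by a direct manipulation of the Cauchy product. Starting from $C(\xx,\yy) = \sum_{\lambda} f_\lambda(\xx) g_\lambda(\yy)$ and multiplying by $f_\mu(\xx)$, we get $f_\mu(\xx) C(\xx,\yy) = \sum_{\lambda} \bigl(f_\mu(\xx) f_\lambda(\xx)\bigr) g_\lambda(\yy) = \sum_{\lambda}\sum_{\nu} d^\nu_{\mu\lambda} f_\nu(\xx) g_\lambda(\yy)$. Now swap the order of summation and collect the coefficient of each $f_\nu(\xx)$: the inner sum becomes $\sum_{\lambda} d^\nu_{\mu\lambda} g_\lambda(\yy)$, which by \eqref{eq:a_perp=sum_db} (with the roles of indices read off appropriately) is exactly $f_\mu^\perp g_\nu(\yy)$. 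Relabeling $\nu$ as $\lambda$ yields $f_\mu(\xx) C(\xx,\yy) = \sum_{\lambda} f_\lambda(\xx) f_\mu^\perp g_\lambda(\yy)$, as claimed. One should take a moment to justify the interchange of the two infinite sums: this is legitimate because $C(\xx,\yy)$ is a well-defined element of $\QQ[[\xx,\yy]]$ and, when restricted to any fixed total degree, only finitely many terms contribute, so all sums are finite in each graded piece.

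The only genuinely delicate point is the bookkeeping of indices on the structure constants $d^\nu_{\lambda\mu}$ — in particular, recognizing that commutativity of the product $f_\mu(\xx) f_\lambda(\xx) = f_\lambda(\xx) f_\mu(\xx)$ makes $d^\nu_{\mu\lambda}$ symmetric in its two lower indices, so that $\sum_\lambda d^\nu_{\mu\lambda} g_\lambda(\yy)$ indeed matches the right-hand side of \eqref{eq:a_perp=sum_db} read with $\lambda \mapsto \nu$. Everything else is a routine application of duality and bilinearity; there is no analytic or combinatorial obstacle. I would present \eqref{eq:a_perp=sum_db} first and then \eqref{eq:cauchy_product_skew} as a short corollary of it, keeping the convergence remark brief.
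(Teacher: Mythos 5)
Your proof is correct and follows essentially the same route as the paper: \eqref{eq:a_perp=sum_db} is exactly the duality computation the paper invokes (just written out in full by pairing against $f_\kappa(\xx)$), and your derivation of \eqref{eq:cauchy_product_skew} --- expand $f_\mu(\xx)f_\lambda(\xx)$ in the Cauchy product, swap sums, and apply \eqref{eq:a_perp=sum_db} --- is the paper's argument verbatim. The remarks on graded convergence and on symmetry of the structure constants are harmless but not needed.
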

\begin{proof}
  The first equation follows immediately from the duality of the two bases, and
  the second one also can be easily obtained as follows:
  \begin{align*}
    f_\mu(\xx) C(\xx,\yy)
      &= \sum_{\nu\in\Par} f_\mu(\xx) f_\nu(\xx) g_\nu(\yy) \\
      &= \sum_{\lambda\in\Par} f_\lambda(\xx) \sum_{\nu\in\Par} d^\lambda_{\nu\mu} g_\nu(\yy) \\
      &= \sum_{\lambda\in\Par} f_\lambda(\xx) f_\mu^\perp g_{\lambda}(\yy)
  \end{align*}
  by \eqref{eq:a_perp=sum_db}.
\end{proof}

\section{Noncommutative symmetric functions} \label{sec:noncomm}
In this section, we begin by providing an overview of the theory of noncommutative
symmetric functions, and then we present a further development using the variant
of the Cauchy product~\eqref{eq:cauchy_product_skew}.

The theory of noncommutative symmetric functions is a powerful tool for studying the
linear structure of symmetric functions. The theory gives a framework for expanding a
given symmetric function in terms of a specific symmetric function basis, and
especially for showing the positivity of this basis.
The theory was initially introduced by Fomin and Greene in \cite{FG98}.
Later, in \cite{BF17}, Blasiak and Fomin presented a more comprehensive algebraic
framework for noncommutative symmetric functions.
We follow notations in \cite{BF17}.
(Moreover, in \cite{Hwa22,BEPS22}, the authors extended the theory to apply to
posets. We will revisit their generalization in Section~\ref{sec:chrom_sym}.)

Fix an integer \( N\ge 1 \). Let \( \UU=\QQ \langle u_1,\dots,u_N \rangle \) be
the free associative \( \QQ \)-algebra generated by \( \{u_1,\dots, u_N\} \).
We will regard the generators \( u_1,\dots,u_N \) as noncommuting variables.
To simplify notation, we write \( u_\ww = u_{\ww_1}\cdots u_{\ww_d} \) for a word
\( \ww=\ww_1\cdots\ww_d \) on \( [N] \).
For \( k\ge 1 \), the \emph{noncommutative elementary symmetric function}
\( \ee_k(\uu) \) is defined by
\[
  \ee_k(\uu) = \sum_{N\ge i_1>\dots>i_k\ge 1} u_{i_1}\cdots u_{i_k}.
\]
We also define an analogue of the complete homogeneous symmetric function using
the relation~\eqref{eq:relation_he}.
For \( k\ge 1 \), the \emph{noncommutative complete homogeneous symmetric function}
\( \hh_k(\uu) \) is defined as
\[
  \hh_k(\uu) = \ee_1(\uu) \hh_{k-1}(\uu)\ - \dots + (-1)^{k-1} \ee_k(\uu)
\]
inductively. For a partition \( \lambda=(\lambda_1,\dots,\lambda_\ell) \),
let \( \hh_\lambda(\uu) = \hh_{\lambda_1}(\uu)\cdots\hh_{\lambda_\ell}(\uu) \).
It is an easy exercise that the noncommutative complete homogeneous symmetric
function \( \hh_k(\uu) \) has the following simple expression:
\begin{equation} \label{eq:hh=increasing}
  \hh_k(\uu) = \sum_{i_1\le \cdots \le i_k} u_{i_1}\cdots u_{i_k}.
\end{equation}

For a 2-sided ideal \( \II \) of \( \UU \), we say that \emph{\( \II \) satisfies
the commutation relation} if
\[
  \ee_k(\uu) \ee_\ell(\uu) \equiv \ee_\ell(\uu) \ee_k(\uu) \mod \II
\]
for all \( k,\ell\ge 1 \).
In \cite{BF17}, a necessary and sufficient condition for which \( \II \)
satisfies the commutation relation is provided.
Let \( \phi \) be a map given by
\[
  e_k(\xx)\in\Sym \mapsto \ee_k(\uu)\in\UU.
\]
Since \( \UU \) is not commutative, the map \( \phi \) cannot be extended
to an algebra homomorphism.
However, when \( \II \) satisfies the commutation relation,
we have the following proposition, which will be used frequently.
\begin{prop} \label{prop:commutation}
  Let \( \II \) be a 2-sided ideal of \( \UU \) satisfying the commutation relation.
  Then the map
  \[
    \phi: \Sym\rightarrow \UU/\II
  \]
  can be extended to an algebra homomorphism.
  In particular, the image of every identity on \( \Sym \) under the map
  holds on \( \UU/\II \) as a congruence.
\end{prop}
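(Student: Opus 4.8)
The plan is to exploit the freeness of $\Sym$ together with the commutation relation. By \cite[Theorem~7.4.4]{Sta99} quoted above, $\Sym$ is the free commutative $\QQ$-algebra on the generators $e_1(\xx), e_2(\xx), \dots$; concretely there is an algebra isomorphism $\QQ[t_1, t_2, \dots] \cong \Sym$ sending $t_k \mapsto e_k(\xx)$, where the left-hand side is a polynomial algebra in countably many indeterminates, and algebraic independence guarantees that every $f(\xx)\in\Sym$ has a \emph{unique} polynomial expression in the $e_k(\xx)$. Hence, to extend the assignment $e_k(\xx) \mapsto \ee_k(\uu) + \II$ to an algebra homomorphism $\phi \colon \Sym \to \UU/\II$, it is enough, by the universal property of the polynomial algebra, to exhibit a commutative $\QQ$-subalgebra of $\UU/\II$ containing the prospective images $\ee_1(\uu) + \II, \ee_2(\uu) + \II, \dots$.

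First I would set $A \subseteq \UU/\II$ to be the $\QQ$-subalgebra generated by $\{\ee_k(\uu) + \II : k \ge 1\}$. The hypothesis that $\II$ satisfies the commutation relation says precisely that these generators pairwise commute, so $A$ is commutative. The universal property of $\QQ[t_1, t_2, \dots]$ then yields a unique $\QQ$-algebra homomorphism $\QQ[t_1, t_2, \dots] \to A$ with $t_k \mapsto \ee_k(\uu) + \II$; composing with the isomorphism above and with the inclusion $A \hookrightarrow \UU/\II$ gives the desired $\phi$, which is moreover the unique algebra homomorphism $\Sym \to \UU/\II$ sending each $e_k(\xx)$ to $\ee_k(\uu) + \II$. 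This settles the first assertion.

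For the `in particular' clause I would first make precise what the image of an identity is. Writing each $f(\xx)\in\Sym$ as its unique polynomial $P_f$ in the $e_k(\xx)$, one has $\phi(f(\xx)) = P_f(\ee_1(\uu), \ee_2(\uu), \dots) + \II$, which is unambiguous because modulo $\II$ the order of the $\ee_k(\uu)$ within any monomial is irrelevant by the commutation relation. Thus an identity $f(\xx) = g(\xx)$ in $\Sym$, i.e. $P_f = P_g$ as polynomials, becomes the congruence $P_f(\ee_1(\uu), \dots) \equiv P_g(\ee_1(\uu), \dots) \pmod{\II}$ upon applying the homomorphism $\phi$. To connect this with the noncommutative functions already introduced, I would check by induction on $k$, using the defining recurrence~\eqref{eq:relation_he} and that $\phi$ is an algebra homomorphism, that $\phi(h_k(\xx)) = \hh_k(\uu) + \II$; the analogous statements for the noncommutative power sums obtained from~\eqref{eq:relation_pe} and the noncommutative Schur functions obtained from the Jacobi--Trudi formula~\eqref{eq:relation_se} follow in the same way, so every classical identity among $e$, $h$, $p$, $s$ transfers verbatim to a congruence in $\UU/\II$.

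The step I expect to require the most care is the opening move around the noncommutativity of the target: the universal property of a polynomial algebra produces homomorphisms only into commutative rings, so one cannot apply it to $\UU/\II$ directly and must first isolate the commutative subalgebra $A$ --- and it is exactly here that the commutation relation is indispensable, since without it no such extension exists in general. Everything after that is a formal consequence of the freeness of $\Sym$.
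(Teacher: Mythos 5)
Your proposal is correct and follows essentially the same route as the paper: the paper's one-line proof likewise invokes the commutation relation (making the images \( \ee_k(\uu)+\II \) generate a commutative subalgebra of \( \UU/\II \)) together with the freeness of \( \Sym \) as a commutative algebra on the \( e_k(\xx) \), which is exactly the universal-property argument you spell out.
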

\begin{proof}
  The proof follows from the commutation relation and the fact that \( \Sym \) is a
  free commutative algebra generated by \( e_1(\xx), e_2(\xx), \dots \).
\end{proof}

We now assume that the commuting variables \( \xx \) and \( x \) also commute
with \( \uu \).
Let
\[
  H(x,\uu) = \sum_{\ell\ge 0} x^\ell \hh_\ell(\uu) \in\UU[[x]].
\]
We define the \emph{noncommutative Cauchy product} to be
\[
  \Omega(\xx, \uu) = H(x_1, \uu)H(x_2, \uu)\cdots \in\UU[[\xx]].
\]
Then one can easily observe that \( \Omega(\xx,\uu) \) can be written as
\[
  \Omega(\xx,\uu) = \sum_\alpha M_\alpha(\xx) \hh_\alpha(\uu),
\]
where $\alpha=(\alpha_1,\dots,\alpha_\ell)$ ranges over all compositions,
\( M_\alpha(\xx) \) is the monomial quasisymmetric function,
and $\hh_\alpha(\uu) = \hh_{\alpha_1}(\uu)\cdots\hh_{\alpha_\ell}(\uu)$.
Moreover, by \eqref{eq:hh=increasing}, we have
\begin{equation} \label{eq:Omega=Fw}
  \Omega(\xx,\uu) = \sum_{\ww} F_{d, \Des(\ww)}(\xx) u_\ww,
\end{equation}
where the sum ranges over all words on \( [N] \), \( d \) is the length of \( \ww \),
\( F_{d,S}(\xx) \) is the fundamental quasisymmetric function, and
\( \Des(\ww) = \{i\in [d-1] \mid \ww_i > \ww_{i+1}  \} \).

On the other hand, let \( \UU^*\) be the free \( \ZZ \)-module generated by
words on \( [N] \), and \( \langle -,- \rangle \) a pairing between \( \UU \) and
\( \UU^* \) given by \( \langle u_\ww, \vv \rangle = \delta_{\ww,\vv} \)
for words \( \ww \) and \( \vv \). In addition, we naturally extend the pair
to one between \( \UU[[\xx]] \) and \( \UU^* \).
For an ideal \( \II \), let 
\[
  \II^\perp = \{\gamma\in\UU^* \mid \mbox{\( \langle z, \gamma \rangle = 0 \) for all \( z\in\II \)} \}.
\]
If \( \gamma = \sum_\ww \gamma_\ww \ww \in \UU^* \),
then from \eqref{eq:Omega=Fw} we have
\[
  F_\gamma(\xx)
  := \sum_\ww \gamma_\ww F_{d, \Des(\ww)}(\xx)
  = \langle \Omega(\xx,\uu), \gamma \rangle.
\]
Furthermore, we have the following theorem.
\begin{thm}[{\cite[Propositions~2.6 and 2.9]{BF17}}] \label{thm:Omega=gf}
  Suppose that \( \II \) satisfies the commutation relation and
  \( \gamma\in\II^\perp \).
  Then \( F_\gamma(\xx) \) is a symmetric function.
  Moreover, if we can write
  \[
    \Omega(\xx,\uu)\equiv \sum_{\lambda\in\Par} g_\lambda(\xx) \ff_\lambda(\uu)
    \mod \II[[\xx]]
  \]
  for some symmetric function basis $g_\lambda(\xx)$ and noncommutative
  symmetric functions $\ff_\lambda(\uu)$, then we have
  \[
    F_\gamma(\xx) = \sum_{\lambda\in\Par} g_\lambda(\xx) \langle \ff_\lambda(\uu), \gamma \rangle.
  \]
\end{thm}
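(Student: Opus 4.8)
The plan is to move the whole computation into the quotient \( \UU/\II \). The single fact that powers everything is this: since \( \gamma\in\II^\perp \), the scalar \( \langle z,\gamma\rangle \) depends only on the class of \( z \) modulo \( \II \), so, extending the pairing coefficientwise in \( \xx \), the power series \( \langle \Omega(\xx,\uu),\gamma\rangle \) is unchanged if \( \Omega(\xx,\uu) \) is replaced by any representative of its class modulo \( \II[[\xx]] \). Thus I would first pin down a convenient representative of \( \Omega(\xx,\uu)\bmod\II[[\xx]] \), and then read off both assertions by pairing it with \( \gamma \). Throughout, one should keep in mind that everything is graded by total degree in \( \xx \) (equivalently, by word length in \( \uu \)) and that only finitely many words on \( [N] \) occur in each degree, so all the infinite products and sums below make sense degree by degree.

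The representative I would use is \( \sum_{\lambda\in\Par} m_\lambda(\xx)\,\hh_\lambda(\uu) \). To see that it works, start from the expansion \( \Omega(\xx,\uu)=\sum_\alpha M_\alpha(\xx)\,\hh_\alpha(\uu) \) over all compositions \( \alpha \) noted just before the statement. The commutation relation forces the \( \hh_k(\uu) \) to commute pairwise modulo \( \II \) (each \( \hh_k(\uu) \) is a polynomial in \( \ee_1(\uu),\dots,\ee_k(\uu) \), and the \( \ee \)'s commute mod \( \II \)), so \( \hh_\alpha(\uu)\equiv\hh_{\sort(\alpha)}(\uu)\pmod{\II} \). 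Collecting terms by the partition \( \lambda=\sort(\alpha) \) and using the standard identity \( \sum_\alpha M_\alpha(\xx)=m_\lambda(\xx) \), with \( \alpha \) ranging over the rearrangements of \( \lambda \), yields
\[
  \Omega(\xx,\uu)\ \equiv\ \sum_{\lambda\in\Par} m_\lambda(\xx)\,\hh_\lambda(\uu)\ \pmod{\II[[\xx]]}.
\]
(One can alternatively get this by applying \( \phi \), extended to the completions, to the classical Cauchy expansion \( C(\xx,\yy)=\sum_{\lambda\in\Par}m_\lambda(\xx)h_\lambda(\yy) \), after checking via Proposition~\ref{prop:commutation} and~\eqref{eq:relation_he} that \( \phi \) sends each \( h_\lambda(\yy) \) to \( \hh_\lambda(\uu) \) modulo \( \II \).) Pairing the displayed congruence with \( \gamma \) then gives \( F_\gamma(\xx)=\sum_{\lambda\in\Par} m_\lambda(\xx)\,\langle\hh_\lambda(\uu),\gamma\rangle \), a linear combination of monomial symmetric functions and hence a symmetric function, which is the first claim.

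With this in hand the ``moreover'' statement is immediate: if \( \Omega(\xx,\uu)\equiv\sum_{\lambda\in\Par}g_\lambda(\xx)\ff_\lambda(\uu)\pmod{\II[[\xx]]} \), then by the opening observation together with the \( \QQ[[\xx]] \)-linearity of the pairing in its \( \UU[[\xx]] \)-argument,
\[
  F_\gamma(\xx)=\langle\Omega(\xx,\uu),\gamma\rangle=\Bigl\langle\sum_{\lambda\in\Par}g_\lambda(\xx)\ff_\lambda(\uu),\,\gamma\Bigr\rangle=\sum_{\lambda\in\Par}g_\lambda(\xx)\,\langle\ff_\lambda(\uu),\gamma\rangle.
\]
I expect the only genuine obstacle to be establishing the congruence \( \Omega(\xx,\uu)\equiv\sum_\lambda m_\lambda(\xx)\hh_\lambda(\uu) \) cleanly: this is precisely where the commutation relation cannot be dispensed with, since without it the noncommutative \( \hh_k(\uu) \) cannot be reordered and, as~\eqref{eq:Omega=Fw} makes plain, \( \Omega(\xx,\uu) \) genuinely records the order of the letters. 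A secondary point requiring a word of justification is that \( \phi \) (or the pairing) may legitimately be applied to identities living in the completion \( \Sym[[\xx]] \), which is exactly what the grading remark at the end of the first paragraph is for.
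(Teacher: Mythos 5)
The paper does not prove this theorem itself—it is quoted directly from \cite[Propositions~2.6 and 2.9]{BF17}—so there is no in-paper argument to compare against; your proof is correct and is essentially the standard one from that source. Reducing \( \Omega(\xx,\uu) \) modulo \( \II[[\xx]] \) to \( \sum_{\lambda\in\Par} m_\lambda(\xx)\hh_\lambda(\uu) \) via the pairwise commutation of the \( \hh_k(\uu) \) (which follows since each is a polynomial in the \( \ee_j(\uu) \)), and then using that pairing against \( \gamma\in\II^\perp \) only sees classes modulo \( \II \), correctly yields both the symmetry of \( F_\gamma(\xx) \) and the coefficient-extraction formula.
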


To expand a given symmetric function \( F(\xx) \) in terms of a basis
\( \{g_\lambda\}_{\lambda\in\Par} \),
Theorem~\ref{thm:Omega=gf} provides the following strategy.
First, find a suitable \( \gamma \) and \( \II \) such that
\( F(\xx) = F_\gamma(\xx) \) where \( \II \) satisfies the commutation relation
and \( \gamma\in\II^\perp \).
Next, define a noncommutative analogue \( \ff_\lambda(\uu) \) of the dual basis
\(\{f_\lambda(\xx)\}_{\lambda\in\Par} \) of \(\{g_\lambda(\xx)\}_{\lambda\in\Par} \).
Finally, find a monomial expression of \( \ff_\lambda(\uu) \) modulo \( \II \) and
apply Theorem~\ref{thm:Omega=gf} to obtain the desired expansion.

We now present a noncommutative version of the variant \eqref{eq:cauchy_product_skew}
of the Cauchy product.
\begin{thm} \label{thm:skew_Omega}
  Suppose that two bases \( \{f_\lambda(\xx)\}_{\lambda\in\Par} \) and
  \( \{g_\lambda(\xx)\}_{\lambda\in\Par} \) are dual to each other.
  Let \( \II \) be a 2-sided ideal satisfying the commutation relation,
  and \( \ff_\lambda(\uu) := \phi(f_\lambda(\xx)) \) where \( \phi \) is the map
  in Proposition~\ref{prop:commutation} for \( \lambda\in\Par \). Then we have
  \begin{equation} \label{eq:skew_Omega}
    \ff_\mu(\uu) \Omega(\xx,\uu)
    \equiv \sum_{\lambda\in\Par} \ff_\lambda(\uu) f_\mu^\perp g_\lambda(\xx)
    \mod \II[[\xx]].
  \end{equation}
  Consequently, let \( \gamma\in\II^\perp \), then
  \[
    f_\mu^\perp F_\gamma(\xx) = \langle \ff_\mu(\uu) \Omega(\xx,\uu), \gamma \rangle.
  \]
  In particular, the coefficient of \( g_\lambda(\xx) \) of \( f_\mu^\perp F_\gamma
  (\xx) \) is equal to \( \langle \ff_\mu(\uu) \ff_\lambda(\uu), \gamma \rangle \).
\end{thm}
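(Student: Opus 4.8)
The plan is to establish the congruence \eqref{eq:skew_Omega} first, and then obtain the two corollaries by pairing with $\gamma$ and extracting coefficients. The starting point is the monomial expansion of the noncommutative Cauchy product. By the same argument that gives $\Omega(\xx,\uu)=\sum_\alpha M_\alpha(\xx)\hh_\alpha(\uu)$, one checks that
\[
  \Omega(\xx,\yy,\uu) := H(x_1,\uu)H(x_2,\uu)\cdots H(y_1,\uu) H(y_2,\uu)\cdots
  = \Omega(\xx,\uu)\,\Omega(\yy,\uu),
\]
since the $\hh_\ell(\uu)$'s all live in $\UU$ and the scalar variables commute with $\uu$; but this identity is not quite what is needed. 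Instead, the cleaner route is to work modulo $\II[[\xx]]$ from the outset and use Proposition~\ref{prop:commutation}: since $\II$ satisfies the commutation relation, the map $\phi$ is an algebra homomorphism $\Sym\to\UU/\II$, and every identity among symmetric functions becomes a congruence. In particular, applying $\phi$ (in the $\uu$-variables) to the variant \eqref{eq:cauchy_product_skew} of the Cauchy product — which reads $f_\mu(\xx)\,C(\xx,\yy)=\sum_\lambda f_\lambda(\xx)\,f_\mu^\perp g_\lambda(\yy)$ — is the conceptual heart of the matter, once we know that $\Omega(\xx,\uu)$ is the right noncommutative lift of $C(\xx,\yy)$.

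Concretely, I would proceed as follows. First, recall $\Omega(\xx,\uu)=\sum_\alpha M_\alpha(\xx)\hh_\alpha(\uu)$, and observe that modulo $\II$ the generators $\ee_k(\uu)$ commute, hence so do the $\hh_k(\uu)$; therefore modulo $\II[[\xx]]$ the sum over compositions $\alpha$ collapses, by the same bookkeeping that turns $\sum_\alpha M_\alpha(\xx)h_{\alpha_1}(\yy)\cdots$ into $\sum_\lambda m_\lambda(\xx)h_\lambda(\yy)$, into
\[
  \Omega(\xx,\uu)\equiv \sum_{\lambda\in\Par} m_\lambda(\xx)\,\hh_\lambda(\uu)\pmod{\II[[\xx]]}.
\]
Since $\phi(h_\lambda(\xx))=\hh_\lambda(\uu)$ by the inductive definition of $\hh_k$ and the relation \eqref{eq:relation_he}, and since $\{m_\lambda\}$ and $\{h_\lambda\}$ are dual, this exhibits $\Omega(\xx,\uu)$ modulo $\II[[\xx]]$ as $\phi$ applied to $C(\xx,\yy)=\sum_\lambda f_\lambda(\xx)g_\lambda(\yy)$ with the $\yy$-side carried by $\phi$; more precisely, for any dual pair one has $\Omega(\xx,\uu)\equiv\sum_\lambda g_\lambda(\xx)\,\ff_\lambda(\uu)$, which is exactly the hypothesis format of Theorem~\ref{thm:Omega=gf}. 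Now left-multiply by $\ff_\mu(\uu)$ and use that $\phi$ is an algebra homomorphism together with \eqref{eq:cauchy_product_skew}: the left side $\ff_\mu(\uu)\Omega(\xx,\uu)$ is $\phi$ applied to $f_\mu(\yy)C(\xx,\yy)$ (reading $\yy$ through $\phi$), which by \eqref{eq:cauchy_product_skew} equals $\phi$ applied to $\sum_\lambda f_\lambda(\yy)\,f_\mu^\perp g_\lambda(\xx)=\sum_\lambda f_\mu^\perp g_\lambda(\xx)\cdot f_\lambda(\yy)$, i.e.\ $\sum_\lambda f_\mu^\perp g_\lambda(\xx)\,\ff_\lambda(\uu)$. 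This is \eqref{eq:skew_Omega}.

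For the two consequences: pair both sides of \eqref{eq:skew_Omega} with $\gamma\in\II^\perp$. On the left we get $\langle\ff_\mu(\uu)\Omega(\xx,\uu),\gamma\rangle$; on the right, since the pairing is $\QQ[[\xx]]$-linear and kills $\II$, we get $\sum_\lambda f_\mu^\perp g_\lambda(\xx)\,\langle\ff_\lambda(\uu),\gamma\rangle$. By Theorem~\ref{thm:Omega=gf} applied with the expansion $\Omega(\xx,\uu)\equiv\sum_\lambda g_\lambda(\xx)\ff_\lambda(\uu)$, we recognize $\sum_\lambda g_\lambda(\xx)\langle\ff_\lambda(\uu),\gamma\rangle = F_\gamma(\xx)$; applying the skewing operator $f_\mu^\perp$ term by term gives $\sum_\lambda f_\mu^\perp g_\lambda(\xx)\langle\ff_\lambda(\uu),\gamma\rangle = f_\mu^\perp F_\gamma(\xx)$. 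Hence $f_\mu^\perp F_\gamma(\xx)=\langle\ff_\mu(\uu)\Omega(\xx,\uu),\gamma\rangle$. Finally, to read off the coefficient of $g_\lambda(\xx)$: expand $\ff_\mu(\uu)\ff_\lambda(\uu)$ in the $\ff_\nu$'s via the structure constants $d^\nu_{\mu\lambda}$ (which by Proposition~\ref{prop:commutation} match those of $f_\mu f_\lambda$ modulo $\II$), substitute into \eqref{eq:skew_Omega} after pairing, and compare with \eqref{eq:a_perp=sum_db}; alternatively, and more directly, use \eqref{eq:skew_Omega} to write $f_\mu^\perp g_\lambda(\xx)$ as the "coefficient of $\ff_\lambda(\uu)$" and then expand $g_\lambda(\xx)=\sum_\nu(\text{stuff})$ — cleanest is to note $f_\mu^\perp g_\lambda = \sum_\nu d^\lambda_{\mu\nu}g_\nu$ so the $g_\lambda$-coefficient of $f_\mu^\perp F_\gamma(\xx)$ is $\sum_\nu d^\nu_{\mu?}\langle\ldots\rangle$, matching $\langle\ff_\mu(\uu)\ff_\lambda(\uu),\gamma\rangle$ once the structure constants are identified.

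The main obstacle I anticipate is the very first step: rigorously justifying that modulo $\II[[\xx]]$ the composition-indexed sum $\sum_\alpha M_\alpha(\xx)\hh_\alpha(\uu)$ collapses to the partition-indexed sum $\sum_\lambda g_\lambda(\xx)\ff_\lambda(\uu)$ for an \emph{arbitrary} dual pair $(f_\lambda,g_\lambda)$ — equivalently, that $\Omega(\xx,\uu)\bmod\II[[\xx]]$ is genuinely the homomorphic image of the Cauchy kernel and not merely of its $m$–$h$ expansion. This is really where Proposition~\ref{prop:commutation} does the work: once $\phi$ is an algebra map, $\phi\big(\sum_\lambda f_\lambda(\xx)g_\lambda(\yy)\big)$ is basis-independent and equals $\sum_\lambda f_\lambda(\xx)\ff_\lambda(\uu)$ for every dual pair, so one must be careful to phrase the argument in terms of applying $\phi$ to symmetric-function identities rather than manipulating the noncommutative series directly. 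Everything after that is formal: linearity of the pairing, $\QQ[[\xx]]$-linearity of $f_\mu^\perp$, and a comparison of structure constants.
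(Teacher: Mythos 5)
Your proof is correct and takes essentially the same route as the paper: apply the homomorphism \( \phi \) of Proposition~\ref{prop:commutation} (in the \( \yy \)-variables, after noting \( \Omega(\xx,\uu)\equiv\sum_\lambda g_\lambda(\xx)\ff_\lambda(\uu) \bmod \II[[\xx]] \)) to the variant \eqref{eq:cauchy_product_skew} of the Cauchy product to obtain \eqref{eq:skew_Omega}, then pair with \( \gamma \) and invoke Theorem~\ref{thm:Omega=gf}. The only blemishes are cosmetic: in your final paragraph \( \sum_\lambda f_\lambda(\xx)\ff_\lambda(\uu) \) should read \( \sum_\lambda g_\lambda(\xx)\ff_\lambda(\uu) \), and the last coefficient extraction is cleaner if you simply pair \( \ff_\mu(\uu)\Omega(\xx,\uu)\equiv\sum_\lambda g_\lambda(\xx)\,\ff_\mu(\uu)\ff_\lambda(\uu) \) with \( \gamma \) instead of juggling structure constants.
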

\begin{proof}
  The identity~\eqref{eq:skew_Omega} follows from \eqref{eq:cauchy_product_skew}
  and Proposition~\ref{prop:commutation}. Furthermore, we have
  \begin{align*}
    f_\mu^\perp F_\gamma(\xx)
      &= \sum_\lambda f_\mu^\perp g_\lambda(\xx) \langle \ff_\lambda(\uu), \gamma \rangle && \mbox{by Theorem~\ref{thm:Omega=gf},} \\
      &= \langle \ff_\mu(\uu) \Omega(\xx,\uu), \gamma \rangle && \mbox{by \eqref{eq:skew_Omega},} 
  \end{align*}
  as desired.
\end{proof}
Therefore, if we have equipped an appropriate ideal \( \II \) and \( \gamma \)
such that \( F(\xx) = F_\gamma(\xx) \) and \( \gamma\in\II^\perp \),
then knowing \( f_\mu^\perp F(\xx) \) is essentially equivalent
to having a monomial expression of \( \ff_\mu(\uu) \) modulo \( \II \).

\section{Applications} \label{sec:application}
In this section, we use Theorem~\ref{thm:skew_Omega} to study skewing certain
symmetric functions by several bases.

\subsection{Complete homogeneous symmetric functions}
In this section, we apply Theorem~\ref{thm:skew_Omega} to complete homogeneous
symmetric functions \( h_\lambda(\xx) \).
While the results exhibited in this section can be obtained directly from the
definitions, we will nonetheless use the noncommutative symmetric
function machinery as a warm-up exercise. Moreover, the results will be utilized
in the subsequent sections.

To obtain complete homogeneous symmetric functions, we first need to define
an ideal properly. In this case, let \( \II_0 \) be the 2-sided ideal of
\( \UU \) generated by
\[
  u_a u_b - u_b u_a \quad \mbox{(\( 1\le a < b \le N \))}.
\]
Of course, \( \II_0 \) satisfies the commutation relation.
In fact, the quotient \( \UU/\II_0 \) is isomorphic
to the polynomial ring \( \QQ[y_1,\dots,y_N] \), and then noncommutative
symmetric functions to be defined in the sequel have simple monomial expressions
that are the same as those for ordinary symmetric functions.

For a partition \( \lambda \), let \( [\lambda] \) be the set of all distinct
permutations of the letters, \( \lambda_1 \) 1's, \( \lambda_2 \) 2's,
\( \dots \), \( \lambda_\ell \) \( \ell \)'s.
Here, we assume that \( N \ge \ell(\lambda) \).
For example, if \( \lambda = (2,1,1) \), then
\[
  [\lambda] = \{\mathsf{1123, 1132, 1213, 1231, 1312, 1321, 2113, 2131, 2311, 3112, 3121, 3211}  \}.
\]
Define
\[
  \gamma_\lambda = \sum_{\ww\in [\lambda]} \ww \in \UU^*,
\]
so it is easy to check that \( \gamma_\lambda \in \II_0^\perp \).

Recall the Cauchy product formula:
\[
  C(\xx,\yy) = \sum_{\lambda\in\Par} m_\lambda(\xx) h_\lambda(\yy).
\]
For a partition \( \lambda \), define \( \mono_\lambda(\uu) := \phi(m_\lambda(\xx))
\in\UU/\II_0 \). Then by Proposition~\ref{prop:commutation}, we have
\begin{equation} \label{eq:Omega=mh}
  \Omega(\xx,\uu) \equiv \sum_{\lambda\in\Par} \mono_\lambda(\uu) h_\lambda(\xx)
  \mod \II_0[[\xx]].
\end{equation}
Then, we can easily show the following proposition from Theorem~\ref{thm:Omega=gf}.
\begin{lem}
  We have
  \[
    F_{\gamma_\lambda}(\xx) = h_\lambda(\xx).
  \]
\end{lem}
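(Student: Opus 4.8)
The plan is to apply Theorem~\ref{thm:Omega=gf} with the pair of dual bases $\{f_\lambda(\xx)\}=\{h_\lambda(\xx)\}$ and $\{g_\lambda(\xx)\}=\{m_\lambda(\xx)\}$, which are dual with respect to the Hall inner product by the defining property $\langle m_\lambda, h_\mu\rangle = \delta_{\lambda\mu}$. Since $\II_0$ satisfies the commutation relation and we have already established $\gamma_\lambda\in\II_0^\perp$, Theorem~\ref{thm:Omega=gf} tells us that $F_{\gamma_\lambda}(\xx)$ is symmetric and, using the expansion \eqref{eq:Omega=mh}, that
\[
  F_{\gamma_\lambda}(\xx) = \sum_{\mu\in\Par} h_\mu(\xx)\,\langle \mono_\mu(\uu), \gamma_\lambda \rangle.
\]
So everything reduces to computing the pairing $\langle \mono_\mu(\uu), \gamma_\lambda\rangle$.

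The key step is therefore to identify $\mono_\mu(\uu) = \phi(m_\mu(\xx))$ explicitly modulo $\II_0$. Since $\UU/\II_0 \cong \QQ[y_1,\dots,y_N]$ with $u_i \mapsto y_i$, and $\phi$ is an algebra homomorphism by Proposition~\ref{prop:commutation}, $\mono_\mu(\uu)$ is just the image of the ordinary monomial symmetric function $m_\mu(y_1,\dots,y_N)$. To pair against $\gamma_\lambda\in\UU^*$, I would lift this back to a canonical representative in $\UU$: the sum over all words $\ww$ whose multiset of multiplicities of the letters $1,\dots,N$ equals the multiset $\{\mu_1,\mu_2,\dots\}$ (with the remaining letters absent), i.e. $\mono_\mu(\uu) \equiv \sum_{\ww\in[\mu]} u_\ww \bmod \II_0$, where $[\mu]$ is exactly the set defined in the excerpt (with the convention that letters beyond $\ell(\mu)$ have exponent $0$). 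Strictly, one should check that this particular lift lies in $\II_0^\perp$'s "transverse" complement so that the pairing $\langle \mono_\mu(\uu),\gamma_\lambda\rangle$ is well-defined independently of the lift; this follows because $\gamma_\lambda\in\II_0^\perp$ and the pairing descends to $(\UU/\II_0)\times\II_0^\perp$.

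Once the monomial expression $\mono_\mu(\uu)\equiv\sum_{\ww\in[\mu]} u_\ww$ is in hand, the pairing is immediate: by the definition $\langle u_\ww,\vv\rangle=\delta_{\ww,\vv}$, we get
\[
  \langle \mono_\mu(\uu), \gamma_\lambda \rangle
  = \Big\langle \sum_{\ww\in[\mu]} u_\ww,\ \sum_{\vv\in[\lambda]}\vv \Big\rangle
  = \#\big([\mu]\cap[\lambda]\big) = \delta_{\lambda\mu},
\]
since $[\mu]$ and $[\lambda]$ are disjoint unless $\lambda=\mu$ (the multiset of multiplicities determines the partition). Substituting back gives $F_{\gamma_\lambda}(\xx)=\sum_\mu h_\mu(\xx)\delta_{\lambda\mu}=h_\lambda(\xx)$, as claimed.

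I expect the main obstacle to be the bookkeeping around the lift in the second step: making precise that $\phi(m_\mu(\xx))$, a priori only an element of $\UU/\II_0$, pairs with $\gamma_\lambda$ to give what the naive word-sum representative $\sum_{\ww\in[\mu]}u_\ww$ gives. This is genuinely routine given that the pairing between $\UU$ and $\UU^*$ descends to $(\UU/\II)\times\II^\perp$ for any ideal $\II$—a fact implicitly used throughout Section~\ref{sec:noncomm}—but it is the only point requiring care. Everything else is a direct application of Theorem~\ref{thm:Omega=gf} together with the explicit isomorphism $\UU/\II_0\cong\QQ[y_1,\dots,y_N]$.
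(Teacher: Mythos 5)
Your strategy is the paper's: invoke Theorem~\ref{thm:Omega=gf} for the dual pair $\{m_\lambda(\xx)\},\{h_\lambda(\xx)\}$ together with \eqref{eq:Omega=mh}, reduce to the pairing $\langle \mono_\mu(\uu),\gamma_\lambda\rangle$, and compute it through the isomorphism $\UU/\II_0\cong\QQ[y_1,\dots,y_N]$; the well-definedness point you raise (the pairing descends to $(\UU/\II_0)\times\II_0^\perp$) is fine. The gap is in your monomial expression for $\mono_\mu(\uu)$. The congruence $\mono_\mu(\uu)\equiv\sum_{\ww\in[\mu]}u_\ww \pmod{\II_0}$ is false: under the isomorphism the right-hand side maps to $\binom{|\mu|}{\mu_1,\dots,\mu_\ell}\,y_1^{\mu_1}\cdots y_\ell^{\mu_\ell}$ (all words in $[\mu]$ have the same commutative image), whereas $\mono_\mu(\uu)=\phi(m_\mu(\xx))$ maps to $m_\mu(y_1,\dots,y_N)$, which has one monomial, with coefficient $1$, for \emph{every} support $1\le i_1<\dots<i_k\le N$ and every distinct rearrangement $\alpha$ of $\mu$, not only the support $\{1,\dots,\ell(\mu)\}$. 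The same objection applies to your alternative description ``all words whose multiset of multiplicities is $\{\mu_1,\mu_2,\dots\}$'': summing over \emph{all} words of a given content overcounts each commutative monomial by a multinomial factor. Your final count is also misstated in a way that masks this: $\#\bigl([\mu]\cap[\lambda]\bigr)=\binom{|\lambda|}{\lambda_1,\dots,\lambda_\ell}\,\delta_{\lambda\mu}$, not $\delta_{\lambda\mu}$, so taken literally your lift would yield $F_{\gamma_\lambda}(\xx)=\binom{|\lambda|}{\lambda_1,\dots,\lambda_\ell}\,h_\lambda(\xx)$; the correct final answer emerges only because the two errors cancel.

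The repair is exactly the paper's computation: choose as representative
\[
  \mono_\mu(\uu)\equiv\sum_{\alpha}\ \sum_{1\le i_1<\dots<i_k\le N} u_{i_1}^{\alpha_1}\cdots u_{i_k}^{\alpha_k} \pmod{\II_0},
\]
where $\alpha$ runs over distinct rearrangements of $\mu$ and $k=\ell(\mu)$, i.e.\ exactly one \emph{weakly increasing} word for each monomial of $m_\mu(y_1,\dots,y_N)$. Since $[\lambda]$ contains a unique weakly increasing word, namely $1^{\lambda_1}2^{\lambda_2}\cdots\ell^{\lambda_\ell}$, and this word appears in the sum above precisely when $\mu=\lambda$ (and then exactly once), the pairing equals $\delta_{\lambda\mu}$ and the lemma follows.
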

\begin{proof}
  We only need to find a monomial expression for \( \mono_\lambda(\uu) \).
  But, as mentioned above, the quotient \( \UU/\II_0 \) is isomorphic to
  the polynomial ring with \( n \) variables, and thus the noncommutative analogue
  of monomial symmetric functions can be expressed as same as the ordinary monomial
  symmetric function. Then, we have
  \begin{equation} \label{eq:mono=mono}
    \mono_\lambda(\uu) \equiv
    \sum_{\alpha} \sum_{1\le i_1<\dots< i_k\le N} u_{i_1}^{\alpha_1}\cdots u_{i_k}^{\alpha_k} \mod \II_0,
  \end{equation}
  where \( \alpha \) ranges over all distinct permutations of \( \lambda \),
  and \( k=\ell(\lambda) \).
  Note that indices in each monomial appearing in \eqref{eq:mono=mono} are weakly
  increasing.
  On the other hand, there is a unique weakly increasing word in \( [\lambda] \).
  Therefore, by Theorem~\ref{thm:Omega=gf} and \eqref{eq:Omega=mh}, we deduce
  \[
    F_{\gamma_\lambda} = \sum_\mu h_\mu(\xx) \langle \mono_\mu(\uu), \gamma_\lambda \rangle = h_\lambda(\xx).
  \]
\end{proof}

Let us now skew the complete homogeneous functions.
We first consider the skewing operator \( e_k^\perp \).
For \( i\in [N] \), let \( \epsilon_i \) be the (0,1)-vector of length \( N \)
whose unique nonzero entry is the \( i \)-th entry.
For a subset \( S\subseteq [N] \), let
\[
  \epsilon_S = \sum_{i\in S} \epsilon_i.
\]
Before stating a result, we fix conventional notation.
For \( k < 0 \), let \( h_k(\xx) = 0 \) and \( h_0(\xx) = 1 \).
In addition, for an integer sequence \( \alpha=(\alpha_1,\dots,\alpha_d) \),
we let \( h_\alpha(\xx) = h_{\alpha_1}(\xx)\cdots h_{\alpha_d}(\xx) \).
\begin{prop} \label{prop:e_perp_h}
  For a partition \( \lambda \) and an integer \( k\ge 1 \),
  we have
  \[
    e_k^\perp h_\lambda(\xx)
    = \sum_{S} h_{\lambda-\epsilon_S}(\xx),
  \]
  where \( S \) ranges over all \( k \)-subsets of \( [\ell(\lambda)] \).
\end{prop}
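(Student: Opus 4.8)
The plan is to apply Theorem~\ref{thm:skew_Omega} with the ideal \( \II_0 \), the dual bases \( \{m_\lambda(\xx)\} \) and \( \{h_\lambda(\xx)\} \), and the covector \( \gamma_\lambda \in \II_0^\perp \). Since \( F_{\gamma_\lambda}(\xx) = h_\lambda(\xx) \) by the preceding lemma, Theorem~\ref{thm:skew_Omega} tells us that the coefficient of \( h_\nu(\xx) \) in \( e_k^\perp h_\lambda(\xx) \) equals \( \langle \ee_k(\uu)\, \mono_\nu(\uu), \gamma_\lambda \rangle \), where \( \ee_k(\uu) = \phi(e_k(\xx)) \) is the noncommutative elementary symmetric function. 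So the whole computation reduces to understanding this pairing in the quotient \( \UU/\II_0 \), which is the honest polynomial ring \( \QQ[y_1,\dots,y_N] \).

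First I would write out \( \ee_k(\uu)\,\mono_\nu(\uu) \) modulo \( \II_0 \) as an explicit sum of monomials. By \eqref{eq:mono=mono}, \( \mono_\nu(\uu) \) is the sum of weakly increasing monomials \( u_{i_1}^{\alpha_1}\cdots u_{i_k}^{\alpha_k} \) over permutations \( \alpha \) of \( \nu \) and strictly increasing index tuples; multiplying by \( \ee_k(\uu) = \sum_{j_1 > \cdots > j_k} u_{j_1}\cdots u_{j_k} \) and reducing modulo \( \II_0 \) (i.e., sorting indices) produces, for each \( k \)-subset \( T \subseteq [N] \), the product of \( \prod_{j\in T} y_j \) with \( m_\nu(y_1,\dots,y_N) \). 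The key point is that \( \langle -, \gamma_\lambda \rangle \) picks out the coefficient of the unique weakly increasing word in \( [\lambda] \), i.e., it reads off the coefficient of the monomial \( y_1^{\lambda_1} y_2^{\lambda_2}\cdots y_\ell^{\lambda_\ell} \) (with \( \ell = \ell(\lambda) \)) in the image polynomial. Thus I need the coefficient of \( y_1^{\lambda_1}\cdots y_\ell^{\lambda_\ell} \) in \( \bigl(\prod_{j\in T} y_j\bigr) m_\nu(\mathbf{y}) \), summed over all \( k \)-subsets \( T \).

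Next I would identify which pairs \( (T, \alpha) \) contribute. A monomial from \( T \) and a permutation \( \alpha \) of \( \nu \) with support tuple \( (i_1 < \cdots) \) together give \( y_1^{\lambda_1}\cdots y_\ell^{\lambda_\ell} \) precisely when the exponent vector of \( m_\nu \)-term plus \( \epsilon_T \) equals \( (\lambda_1,\dots,\lambda_\ell,0,\dots,0) \); equivalently \( T \subseteq [\ell] \), \( T \) is a \( k \)-subset, and the \( m_\nu \)-term has exponent vector \( \lambda - \epsilon_T \). Since \( m_\nu \) is multiplicity-free as a sum of distinct monomials, for each admissible \( T \) there is exactly one such term precisely when \( \lambda - \epsilon_T \) is a (weak) composition that is a rearrangement of \( \nu \) — that is, when \( \nu = (\lambda-\epsilon_S) \) sorted, where \( S = T \) is a \( k \)-subset of \( [\ell(\lambda)] \). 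Collecting these contributions over all \( \nu \) gives exactly \( e_k^\perp h_\lambda(\xx) = \sum_S h_{\lambda - \epsilon_S}(\xx) \), with the convention \( h_{-1} = 0 \) automatically killing any \( S \) that would push an entry negative, and the \( h_\alpha \) notation for integer sequences absorbing the unsorted order.

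The main obstacle is purely bookkeeping: making the bijection in the previous paragraph precise, in particular checking that distinct \( k \)-subsets \( S \) of \( [\ell(\lambda)] \) can yield the same partition \( \nu \) after sorting (they can, e.g.\ when \( \lambda \) has repeated parts) and that the pairing correctly counts each occurrence with multiplicity one — so that the sum over \( S \) in the statement is genuinely a sum over subsets, not over partitions. I expect no serious difficulty here beyond being careful that \( \langle \ee_k(\uu)\mono_\nu(\uu), \gamma_\lambda\rangle \) is exactly the number of \( k \)-subsets \( S \) with \( \operatorname{sort}(\lambda - \epsilon_S) = \nu \), which then matches the claimed formula term by term. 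An alternative, even shorter route is to bypass the noncommutative machinery entirely: expand \( C(\xx,\yy) = \prod_{i,j}(1-x_iy_j)^{-1} = \sum_\lambda m_\lambda(\xx)h_\lambda(\yy) \), apply \eqref{eq:cauchy_product_skew} with \( f = m \), \( g = h \), and use that \( m_\lambda m_{(1^k)} = \sum m_{\lambda + \epsilon_S} \) (sorted) to read off \( d^\lambda_{(1^k)\nu} \); but since the section is explicitly a warm-up for the noncommutative method, I would present the \( \UU/\II_0 \) computation as the main argument.
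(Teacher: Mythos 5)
Your proposal is correct and takes essentially the same route as the paper: apply Theorem~\ref{thm:skew_Omega} with the ideal \( \II_0 \), the dual pair \( (m_\lambda, h_\lambda) \) and the covector \( \gamma_\lambda \) (together with \( F_{\gamma_\lambda}=h_\lambda \)), and then evaluate \( \langle \ee_k(\uu)\mono_\nu(\uu), \gamma_\lambda \rangle \) via the monomial expressions modulo \( \II_0 \). The only cosmetic difference is that you extract the coefficient of \( y_1^{\lambda_1}\cdots y_{\ell}^{\lambda_\ell} \) in the commutative image \( e_k(\mathbf{y})m_\nu(\mathbf{y}) \), while the paper counts words in \( [\lambda] \) with a strictly decreasing length-\( k \) prefix and weakly increasing suffix; both yield exactly the enumeration of \( k \)-subsets of \( [\ell(\lambda)] \).
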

\begin{proof}
  By Theorem~\ref{thm:skew_Omega} and the previous proposition, we have
  \begin{align*}
    e_k^\perp h_\lambda(\xx)
      &= \langle \ee_k(\uu) \Omega(\xx,\uu), \gamma_\lambda  \rangle \\
      &= \sum_{\mu} h_\mu(\xx) \langle \ee_k(\uu) \mono_\mu(\uu), \gamma_\lambda \rangle.
  \end{align*}
  Since \( \ee_k(\uu) \) is the sum of monomials whose indices are strictly
  decreasing, it suffices to find words in \( [\lambda] \) such that
  the first \( k \) letters are strictly decreasing and the remaining part are weakly
  increasing. These words can be obtained by choosing \( k \)-subsets of
  \( [\ell(\lambda)] \), and thus we have the desired identity.
\end{proof}

We next compute \( p_k^\perp h_\lambda(\xx) \) similarly.
\begin{prop} \label{prop:p_perp_h}
  For a partition \( \lambda \) and an integer \( k\ge 1 \),
  we have
  \[
    p_k^\perp h_\lambda(\xx)
    = \sum_{i=1}^{\ell(\lambda)} h_{\lambda-k \epsilon_i}(\xx).
  \]
\end{prop}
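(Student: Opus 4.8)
The plan is to mimic the proof of Proposition~\ref{prop:e_perp_h}, replacing the noncommutative elementary symmetric function $\ee_k(\uu)$ by the noncommutative power sum $\pp_k(\uu) := \phi(p_k(\xx))$. First I would recall that by Theorem~\ref{thm:skew_Omega} together with the expansion~\eqref{eq:Omega=mh} of $\Omega(\xx,\uu)$ modulo $\II_0$, we have
\[
  p_k^\perp h_\lambda(\xx)
  = \langle \pp_k(\uu) \Omega(\xx,\uu), \gamma_\lambda \rangle
  = \sum_{\mu} h_\mu(\xx) \langle \pp_k(\uu) \mono_\mu(\uu), \gamma_\lambda \rangle,
\]
so everything reduces to identifying a convenient monomial expression for $\pp_k(\uu)$ modulo $\II_0$ and then pairing with $\gamma_\lambda$.

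Next I would observe that since $\UU/\II_0 \cong \QQ[y_1,\dots,y_N]$ and $\phi$ is an algebra homomorphism (Proposition~\ref{prop:commutation}), the noncommutative power sum has the same monomial shape as the ordinary one:
\[
  \pp_k(\uu) \equiv \sum_{i=1}^{N} u_i^k \mod \II_0.
\]
Then $\pp_k(\uu)\mono_\mu(\uu)$ is, modulo $\II_0$, a sum of monomials each of which (after reordering indices, which is legitimate modulo $\II_0$) can be written with weakly increasing indices; but to pair against $\gamma_\lambda = \sum_{\ww\in[\lambda]}\ww$ I must count, for each $\mu$, the number of words $\ww \in [\lambda]$ whose first $k$ letters are all equal and whose remaining letters are weakly increasing. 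Such a word is determined by choosing which value $i \in [\ell(\lambda)]$ occupies the first $k$ positions — this forces $\mu$ to be the partition underlying $\lambda - k\epsilon_i$ (i.e.\ $\lambda$ with its $i$-th part decreased by $k$, reordered), and requires $\lambda_i \ge k$ — and the rest of the word is then the unique weakly increasing arrangement of the leftover letters. Hence $\langle \pp_k(\uu)\mono_\mu(\uu), \gamma_\lambda\rangle$ counts exactly the indices $i$ with $\lambda - k\epsilon_i$ rearranging to $\mu$, and summing over $\mu$ collapses to $\sum_{i=1}^{\ell(\lambda)} h_{\lambda - k\epsilon_i}(\xx)$, with the convention $h_j(\xx)=0$ for $j<0$ absorbing the cases $\lambda_i < k$.

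The one genuinely delicate point — and the step I expect to be the main obstacle — is the bookkeeping in the pairing: I need to be careful that I am matching monomials of $\pp_k(\uu)\mono_\mu(\uu)$ with words in $[\lambda]$ correctly, since $\mono_\mu(\uu)$ is itself a sum over all distinct permutations of $\mu$ and $\pp_k$ contributes a factor $u_i^k$ that may or may not be "compatible" in index order with the block coming from $\mono_\mu$. The clean way around this is exactly as in the proof of Proposition~\ref{prop:e_perp_h}: since $\gamma_\lambda\in\II_0^\perp$, only the $\II_0$-class of $\pp_k(\uu)\mono_\mu(\uu)$ matters, and in that class there is a distinguished representative whose monomials have their first $k$ letters equal (coming from $u_i^k$) followed by a weakly increasing tail; there is a unique word in $[\lambda]$ of each such shape, so the pairing is a straightforward count. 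I would phrase the argument to run parallel to the elementary case so the reader can see it is the same mechanism, just with the "strictly decreasing first $k$ letters" replaced by "$k$ equal first letters."
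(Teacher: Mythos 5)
Your proposal is correct and follows essentially the same route as the paper: define \( \pp_k(\uu)=\phi(p_k(\xx)) \), use the isomorphism \( \UU/\II_0\cong\QQ[y_1,\dots,y_N] \) to get \( \pp_k(\uu)\equiv\sum_{i=1}^N u_i^k \bmod \II_0 \), and then apply Theorem~\ref{thm:skew_Omega} and count words in \( [\lambda] \) exactly as in the proof of Proposition~\ref{prop:e_perp_h}. The only difference is that you spell out the pairing bookkeeping that the paper leaves implicit, which is fine.
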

\begin{proof}
  Mimicking the relation \eqref{eq:relation_pe}, let
  \[
    \pp_k(\uu) := \ee_1(\uu) \hh_{k-1}(\uu) - 2\ee_2(\uu) \hh_{k-2}(\uu) + \cdots
                + (-1)^{k-1} k \ee_k(\uu) \in \UU.
  \]
  Or simply, let \( \pp_k(\uu) = \phi(p_k(\xx))\in\UU/\II_0 \).
  Then, thanks to Theorem~\ref{thm:skew_Omega}, it is enough to express
  the noncommutative analogue \( \pp_k(\uu) \) of the power sum symmetric function
  as a sum of monomials.
  As before, the ideal \( \II_0 \) makes it easy. Similar to the ordinary power
  sum symmetric function \( p_k(\xx) \), we can write \( \pp_k(\uu) \) as
  \[
    \pp_k(\uu) \equiv \sum_{i=1}^N u_i^k \mod \II_0.
  \]
  Then, the theorem completes the proof.
\end{proof}

\subsection{Schur functions}
Schur functions are the most significant basis for \( \Sym \),
with broad applications in combinatorics, representation theory,
algebraic geometry, and many other fields.
In this subsection, we derive the Littlewood--Richardson rule,
the rule for the structure constant of Schur functions,
using the theory of noncommutative symmetric functions.
It is worth noting that the contents of this section served as the starting
point for \cite{FG98}, the pioneering paper that first introduced the theory
of noncommutative symmetric functions.
In fact, some of the material in this section overlaps with \cite{FG98}.

We first recast basic properties of Schur functions.
While we had defined Schur functions as the determinants \eqref{eq:relation_se}
of the certain matrices consisting of elementary symmetric functions,
Schur functions can also be written combinatorially. For a partition \( \lambda \),
a \emph{semistandard Young tableaux} (abbreviated as SSYT) \( T \) of shape
\( \lambda \) is an array of positive integers of shape \( \lambda \) such that
each row is weakly increasing, and each column is strictly increasing.
The \emph{column word} \( \col(T) \) of \( T \) is the word obtained by reading
$T$ from bottom to top, beginning with the leftmost column of $T$ and working
from left to right. 
For example, \( T \) is a SSYT of shape \( (4,3,1) \):
\begin{center}
  \( T \) = \ytableaushort{1134,346,5}
\end{center}
The column word \( \col(T) \) is \( \mathsf{53141634} \).
Then the Schur function \( s_\lambda(\xx) \) is equal to a generating function for
semistandard Young tableaux of shape \( \lambda \)~(\cite[Corollary~7.16.2]{Sta99}):
\[
  s_\lambda(\xx) = \sum_{T\in\SSYT(\lambda)} x^T,
\]
where \( \SSYT(\lambda) \) is the set of semistandard Young tableaux of shape
\( \lambda \) and $x^T=\prod_i x_i^{m_i(T)}$ where $m_i(T)$ is the number of
appearances of $i$ in $T$.

One of significant properties of Schur functions is that
they form an orthonormal basis with respect to the Hall inner product.
Equivalently, the Cauchy product \( C(\xx,\yy) \) can be written as follows:
\begin{equation} \label{eq:C=ss}
  C(\xx,\yy) = \sum_{\lambda\in\Par} s_\lambda(\xx) s_\lambda(\yy).
\end{equation}

We now introduce a noncommutative analogue of Schur functions.
For a partition \( \lambda \), the \emph{noncommutative Schur function}
\( \sch_\lambda(\uu) \) is defined by
\[
  \sch_\lambda(\uu) = \sum_{\sigma\in\SYM_m} \sgn(\sigma)
    \ee_{\lambda'_1-\sigma_1+1}(\uu) \ee_{\lambda'_2-\sigma_2+2}(\uu)\cdots
    \ee_{\lambda'_m-\sigma_m+m}(\uu),
\]
where \( m=\lambda_1 \).
In addition, let \( \II_{\plac} \) be the 2-sided ideal of \( \UU \) generated by
\begin{align*}
  & u_a u_c u_b - u_c u_a u_b \quad \mbox{(\( 1\le a \le b < c \le N \))}, \\
  & u_b u_a u_c - u_b u_c u_a \quad \mbox{(\( 1\le a < b \le c \le N \))}.
\end{align*}
Note that \( \UU/\II_{\plac} \) is called the \emph{plactic algebra}~\cite{LS81}.
\begin{lem}[{\cite[Lemma~3.1]{FG98}}]
  The ideal \( \II_{\plac} \) satisfies the commutation relation.
\end{lem}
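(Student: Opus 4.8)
The plan is to verify directly that the two families of generators of $\II_{\plac}$ force the noncommutative elementary symmetric functions $\ee_k(\uu)$ to commute modulo $\II_{\plac}$. Since $\ee_k(\uu)$ is a sum of squarefree strictly decreasing monomials $u_{i_1}\cdots u_{i_k}$ with $i_1>\dots>i_k$, the product $\ee_k(\uu)\ee_\ell(\uu)$ is a sum of monomials $u_{i_1}\cdots u_{i_k}u_{j_1}\cdots u_{j_\ell}$ where each of the two blocks is strictly decreasing. The goal is a bijection between the monomials of $\ee_k(\uu)\ee_\ell(\uu)$ and those of $\ee_\ell(\uu)\ee_k(\uu)$ which, after applying the plactic relations, produces matching words modulo $\II_{\plac}$.

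First I would set up the right combinatorial model. A monomial of $\ee_k(\uu)\ee_\ell(\uu)$ is indexed by a pair of strictly decreasing sequences, equivalently by a pair of subsets $A, B \subseteq [N]$ with $|A| = k$, $|B| = \ell$, each written in decreasing order and concatenated. This is exactly the data of a (column-strict) filling of two columns of heights $k$ and $\ell$, i.e.\ of the skew-ish shape consisting of two side-by-side columns, with no horizontal comparison imposed. The classical fact I would invoke is Bender--Knuth / ``jeu de taquin on two columns'': the multiset of column words of a two-column filling with column heights $(k,\ell)$, read modulo the plactic relations, equals the multiset obtained from column heights $(\ell,k)$. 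Concretely, for two strictly decreasing words I would describe the explicit switching move: scan the pair of columns and perform the standard exchange that sorts each row to be weakly increasing left-to-right (turning the pair of columns into a genuine two-column SSYT), then read off the new pair of strictly decreasing columns of the transposed heights. Each elementary exchange step is an instance of one of the two generating relations $u_au_cu_b\equiv u_cu_au_b$ ($a\le b<c$) or $u_bu_au_c\equiv u_bu_cu_a$ ($a<b\le c$), applied to a length-3 factor straddling the boundary between the two columns; I would spell out which relation applies in which case according to whether the element being moved is weakly below or strictly above its neighbor in the other column.

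The key steps, in order: (1) reduce the claim to showing $\ee_k(\uu)\ee_\ell(\uu)\equiv \ee_\ell(\uu)\ee_k(\uu) \bmod \II_{\plac}$ for all $k,\ell$; (2) identify monomials of each side with pairs of strictly decreasing words / two-column fillings; (3) define the ``column insertion'' or row-sorting bijection that turns column heights $(k,\ell)$ into $(\ell,k)$; (4) check this bijection is length-preserving and weight-preserving; (5) verify that the bijection is realized by a sequence of applications of the two defining relations of $\II_{\plac}$, so that corresponding monomials are congruent modulo $\II_{\plac}$; (6) sum over all pairs to conclude. Alternatively, since $\UU/\II_{\plac}$ is the plactic algebra $\mathbb{Q}\langle u_1,\dots,u_N\rangle/\II_{\plac}$ and is well known to be isomorphic (as a graded algebra) to the free $\QQ$-module on SSYT with product given by Schensted insertion, I would instead prove (1) by noting that in this algebra $\ee_k(\uu)$ maps to $\sum_{|A|=k} T_A$ where $T_A$ is the single-column tableau on $A$, and that for single columns $A,B$ the insertion product is manifestly symmetric in $(A,B)$ by the classical two-column jeu-de-taquin; this is exactly Lemma~3.1 of \cite{FG98}, and I would cite it, giving the short argument above as the reason.

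The main obstacle is step (5): exhibiting the row-sorting bijection as an \emph{explicit} word-level rewriting using only the two given generators, with all the case analysis (element weakly-below vs.\ strictly-above, and the boundary cases making $a\le b$ vs.\ $a<b$ exactly match the two relation types) handled correctly. Everything else is bookkeeping: length and weight preservation are immediate since the bijection only permutes which column each letter lies in without changing the multiset of letters, and the reduction in step (1) is just Proposition~\ref{prop:commutation}'s hypothesis unwound. I expect roughly half a page, the bulk of it the two-column switching lemma, which can also simply be quoted from the literature on the plactic monoid if a self-contained treatment is deemed unnecessary.
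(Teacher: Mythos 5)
The paper contains no proof of this lemma at all: it is quoted directly from Fomin--Greene, so your fallback --- reduce to \( \ee_k(\uu)\ee_\ell(\uu)\equiv\ee_\ell(\uu)\ee_k(\uu) \bmod \II_{\plac} \) and cite \cite[Lemma~3.1]{FG98} --- is exactly what the paper does and is perfectly adequate. Your reduction in step (1) and the identification of monomials with pairs of strictly decreasing words are also fine.

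Your sketched direct argument, however, has a genuine gap at its center. Row-sorting (or column-inserting) a pair of strictly decreasing words of lengths \( (k,\ell) \) produces a semistandard tableau with at most two columns, of column lengths \( (a,b) \) with \( a\ge b \), \( a\ge\max(k,\ell) \), \( b\le\min(k,\ell) \), \( a+b=k+\ell \) --- not, in general, \( (\ell,k) \). So the step ``read off the new pair of strictly decreasing columns of the transposed heights'' is not a well-defined operation, and the desired bijection does not simply fall out of making the rows weakly increasing. What is actually needed is that within each Knuth class (equivalently, for each fixed insertion tableau \( T \) with at most two columns) the number of words that factor as a strictly decreasing word of length \( k \) followed by one of length \( \ell \) equals the corresponding count with \( (\ell,k) \); concretely, that this count is \( 1 \) exactly when \( b\le\min(k,\ell) \) and \( \max(k,\ell)\le a \), and \( 0 \) otherwise. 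That symmetric characterization, or an explicit word-level matching realizing it by the two Knuth relations, is the entire content of the lemma and is precisely what your ``step (5)'' defers without supplying. Note also that you cannot obtain the needed symmetry from the symmetry of \( F_\gamma(\xx) \), since that is deduced \emph{from} the commutation relation via Theorem~\ref{thm:Omega=gf}, which would be circular. Either carry out the factorization count (e.g.\ by a jeu de taquin argument for the skew shape consisting of two disjoint columns), or do as the paper does and simply cite \cite{FG98}.
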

The plactic algebra has a close relationship with the RSK correspondence.
The RSK correspondence is a fundamental bijection between \( \NN \)-valued
matrices and pairs of semistandard Young tableaux of the same shape.
Specifically, it maps each word \(\ww\) to a pair of tableaux
\((P(\ww),Q(\ww))\), where \( P(\ww) \) is a SSTY whose content
coincides with that of the word.
While we do not describe the correspondence explicitly inhere, it is worth
noting that it gives a bijective proof of \eqref{eq:C=ss}.
The following are well-known.
\begin{lem}[{\cite[Lemma~A1.1.10]{Sta99}}] \label{lem:P(col(T))=T}
  For a semistandard Young tableau \( T \), we have \( P(\col(T)) = T \).
\end{lem}
\begin{thm}[{\cite[Theroem~A1.1.4]{Sta99}}] \label{thm:Knuth_equiv}
  For words \( \ww \) and \( \ww' \),
  \[
    u_\ww \equiv u_{\ww'} \mod \II_{\plac} \quad\mbox{if and only if}\quad
    P(\ww)=P(\ww').
  \]
\end{thm}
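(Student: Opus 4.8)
The plan is to prove the two implications separately, in each case reducing to a purely tableau-theoretic fact about Schensted row insertion. Write $T \leftarrow x$ for the result of row-inserting a letter $x$ into a semistandard tableau $T$, so that $P(\ww) = (\cdots ((\emptyset \leftarrow \ww_1) \leftarrow \ww_2) \cdots \leftarrow \ww_d)$ for a word $\ww = \ww_1 \cdots \ww_d$; in particular, $P(\ww)$ is computed by inserting the letters of $\ww$ one at a time from left to right, and $P(\ww\vv)$ is obtained from $P(\ww)$ by then inserting the letters of $\vv$ in order.

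For the implication ``$P(\ww)=P(\ww') \Rightarrow u_\ww \equiv u_{\ww'}$'', the key assertion is that every word $\ww$ satisfies
\[
  u_\ww \equiv u_{\col(P(\ww))} \pmod{\II_{\plac}};
\]
granting this, $P(\ww)=P(\ww')$ forces $u_\ww \equiv u_{\col(P(\ww))} = u_{\col(P(\ww'))} \equiv u_{\ww'}$. (Combined with Lemma~\ref{lem:P(col(T))=T}, this also identifies the congruence classes modulo $\II_{\plac}$ with semistandard tableaux.) I would prove the key assertion by induction on the length of $\ww$: writing $\ww = \vv x$ with $x$ the last letter, the inductive hypothesis gives $u_\ww \equiv u_{\col(P(\vv))}\,u_x$, so it suffices to prove that for any semistandard tableau $T$ and any letter $x$,
\[
  u_{\col(T)}\, u_x \equiv u_{\col(T \leftarrow x)} \pmod{\II_{\plac}}.
\]
This in turn I would prove by a second induction, on the number of rows of $T$: perform the insertion in the top row, record which entry (if any) is bumped down into the subtableau $T'$ formed by the remaining rows, and check by a short case analysis on where $x$ lands in the top row that $\col(T)$ followed by $x$ can be rewritten into $\col(T\leftarrow x)$ using only the generating relations of $\II_{\plac}$, with the part of the column word coming from $T'$ handled by the inductive hypothesis applied to $T'$ and the bumped letter.

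For the implication ``$u_\ww \equiv u_{\ww'} \Rightarrow P(\ww)=P(\ww')$'', recall that congruence modulo $\II_{\plac}$ is generated by the two families of relations in the definition of $\II_{\plac}$, so it suffices to treat $\ww = \mathbf{p}\,acb\,\mathbf{q}$, $\ww' = \mathbf{p}\,cab\,\mathbf{q}$ with $a \le b < c$, and symmetrically $\ww = \mathbf{p}\,bac\,\mathbf{q}$, $\ww' = \mathbf{p}\,bca\,\mathbf{q}$ with $a < b \le c$. Since $P(\ww)$ is obtained by inserting the common suffix $\mathbf{q}$ into $((P(\mathbf{p})\leftarrow a)\leftarrow c)\leftarrow b$ and $P(\ww')$ by inserting $\mathbf{q}$ into $((P(\mathbf{p})\leftarrow c)\leftarrow a)\leftarrow b$, it is enough to prove the row-bumping identities
\[
  ((T \leftarrow a)\leftarrow c)\leftarrow b = ((T \leftarrow c)\leftarrow a)\leftarrow b \quad (a \le b < c)
\]
and $((T \leftarrow b)\leftarrow a)\leftarrow c = ((T \leftarrow b)\leftarrow c)\leftarrow a$ for $a < b \le c$, for every semistandard tableau $T$. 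I would prove these by induction on the number of rows of $T$, analyzing in the finitely many cases how $a,b,c$ are placed in or bumped out of the top row under each ordering, checking that the two resulting top rows agree and that the letters pushed down into the subtableau of remaining rows either agree or again satisfy the hypotheses of the same identity, so that the inductive hypothesis applies.

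The formal reductions above are routine; the real work, and the main obstacle, is the case analysis hidden in the two insertion lemmas: tracking, under a single row insertion, exactly which cells change and which entries are bumped, and confirming that the residual data handed to the next row down is identical --- or again Knuth-related --- for the two orderings. Organizing those cases cleanly, so that nothing escapes the generating relations of $\II_{\plac}$, is the crux; once it is done, both implications follow immediately.
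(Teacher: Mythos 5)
The paper does not prove this statement at all: it is quoted as known, with a citation to \cite[Theorem~A1.1.4]{Sta99}, so there is no internal argument to compare yours against; I can only assess your outline on its own terms. Your route is the classical one (Knuth's theorem): show that every word is congruent modulo \(\II_{\plac}\) to a reading word of its insertion tableau, and show that a single application of either generating relation does not change the insertion tableau; together with Lemma~\ref{lem:P(col(T))=T} these give the two implications. Your reductions are sound, including the observation that for the ``only if'' direction it suffices to check the two three-letter bumping identities on an arbitrary semistandard tableau \(T\) (every \(T\) occurs as \(P(\mathbf{p})\), e.g.\ for \(\mathbf{p}=\col(T)\)), and the implicit fact that congruence of monomials modulo the binomial ideal \(\II_{\plac}\) is the same as being connected by a chain of elementary Knuth moves.

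Two caveats. First, essentially all of the mathematical content --- the insertion lemma \(u_{\col(T)}u_x\equiv u_{\col(T\leftarrow x)}\) and the three-letter bumping identities --- is deferred to ``a short case analysis,'' which you yourself identify as the crux; as written this is a plan, not a proof. Second, your first induction has a structural mismatch with the paper's reading word: \(\col(T)\) reads columns bottom-to-top, left to right, so it is an \emph{interleaving} of the first-row entries with the columns of the lower subtableau \(T'\); it does not contain \(\col(T')\) as a contiguous factor, and \(\col(T\leftarrow x)\) likewise interleaves the new first row with the columns of \(T'\leftarrow a\), where \(a\) is the bumped letter. Hence ``the part of the column word coming from \(T'\) handled by the inductive hypothesis'' does not apply verbatim. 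The standard repair is to run the induction with the bottom-to-top row reading word \(\row(T)=\row(T')R_1\), which is an honest concatenation so that the single-row bumping lemma acts on the last factor and the inductive hypothesis on the prefix, and then either use \(P(\row(T))=T\) or first check \(\col(T)\equiv\row(T)\pmod{\II_{\plac}}\) by explicit straightening moves (alternatively, organize the induction by columns and column insertion). With that adjustment, your outline is exactly the textbook proof of the cited theorem.
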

For a SSYT \( T \), let
\[
  \gamma_T = \sum_{P(\ww)=T} \ww.
\]
Then by Theorem~\ref{thm:Knuth_equiv}, \(\gamma_T\) belongs to \(\II_{\plac}^\perp\).

We now define a noncommutative version of Schur functions.
For a partition \( \lambda \), the \emph{noncommutative Schur function}
\( \sch_\lambda(\uu) \) is defined by
\[
  \sch_\lambda(\uu) =
      \sum_{\sigma\in\SYM_m} \sgn(\sigma)
          \ee_{\lambda'_1-\sigma_1+1}(\uu) \ee_{\lambda'_2-\sigma_2+2}(\uu)\cdots
          \ee_{\lambda'_m-\sigma_m+m}(\uu), 
\]
where \( m=\lambda_1 \); cf. \eqref{eq:relation_se}.
Surprisingly we can express the noncommutative Schur function as the ordinary
Schur function in \( \UU/\II_{\plac} \).
\begin{thm}[{\cite[Lemma~3.2]{FG98}}] \label{thm:sch=sum_T}
  For a partition \( \lambda \), we have
  \[
    \sch_\lambda(\uu) \equiv \sum_{T\in\SSYT(\lambda,N)} u_{\col(T)} \mod \II_{\plac},
  \]
  where \( \SSYT(\lambda, N) \) is the set of all semistandard Young tableaux
  with entries in \( [N] \).
\end{thm}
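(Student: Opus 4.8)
The plan is to expand the noncommutative Cauchy product $\Omega(\xx,\uu)$ modulo $\II_{\plac}$ in two different ways and match coefficients, so that the monomial expansion of $\sch_\lambda(\uu)$ simply drops out. Throughout, write $\phi\colon\Sym\to\UU/\II_{\plac}$ for the algebra homomorphism provided by Proposition~\ref{prop:commutation}, which is available because $\II_{\plac}$ satisfies the commutation relation.

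First I would record two consequences of $\phi$ being a homomorphism. Applying $\phi$ to the Jacobi--Trudi expression~\eqref{eq:relation_se}, whose right-hand side is a polynomial in the $e_k(\xx)$, gives $\phi(s_\lambda(\xx))\equiv\sch_\lambda(\uu)\pmod{\II_{\plac}}$. Moreover, since $\Omega(\xx,\uu)=\sum_\alpha M_\alpha(\xx)\hh_\alpha(\uu)$ and $\hh_\alpha(\uu)$ depends modulo $\II_{\plac}$ only on the multiset of parts of $\alpha$, we get $\Omega(\xx,\uu)\equiv\sum_\lambda m_\lambda(\xx)\hh_\lambda(\uu)\pmod{\II_{\plac}[[\xx]]}$. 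Now apply $\phi$ to the $\yy$-variables in the Cauchy product formula $\sum_\lambda m_\lambda(\xx)h_\lambda(\yy)=\sum_\lambda s_\lambda(\xx)s_\lambda(\yy)$: the left-hand side maps to $\sum_\lambda m_\lambda(\xx)\hh_\lambda(\uu)$ and the right-hand side to $\sum_\lambda s_\lambda(\xx)\sch_\lambda(\uu)$, so
\[
  \Omega(\xx,\uu)\equiv\sum_{\lambda\in\Par}s_\lambda(\xx)\,\sch_\lambda(\uu)\pmod{\II_{\plac}[[\xx]]}.
\]
(This is precisely the hypothesis of Theorem~\ref{thm:Omega=gf} with $g_\lambda=s_\lambda$ and $\ff_\lambda=\sch_\lambda$.)

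Next I would re-expand $\Omega$ via~\eqref{eq:Omega=Fw}: $\Omega(\xx,\uu)=\sum_\ww F_{d,\Des(\ww)}(\xx)\,u_\ww$. By Theorem~\ref{thm:Knuth_equiv} together with Lemma~\ref{lem:P(col(T))=T}, each word satisfies $u_\ww\equiv u_{\col(P(\ww))}\pmod{\II_{\plac}}$, so grouping words by their insertion tableau $T=P(\ww)$ yields
\[
  \Omega(\xx,\uu)\equiv\sum_{T}\Bigl(\sum_{\ww\,:\,P(\ww)=T}F_{d,\Des(\ww)}(\xx)\Bigr)u_{\col(T)}\pmod{\II_{\plac}[[\xx]]},
\]
the outer sum ranging over semistandard Young tableaux $T$ with entries in $[N]$. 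The one substantive input is the classical identity $\sum_{\ww\,:\,P(\ww)=T}F_{d,\Des(\ww)}(\xx)=s_{\sh(T)}(\xx)$, which follows from the RSK correspondence $\ww\mapsto(P(\ww),Q(\ww))$, the equality $\Des(\ww)=\Des(Q(\ww))$, and the expansion $s_\mu(\xx)=\sum_{Q\in\SYT(\mu)}F_{|\mu|,\Des(Q)}(\xx)$. This turns the last display into $\Omega(\xx,\uu)\equiv\sum_{\lambda\in\Par}s_\lambda(\xx)\bigl(\sum_{T\in\SSYT(\lambda,N)}u_{\col(T)}\bigr)\pmod{\II_{\plac}[[\xx]]}$.

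Finally, I would compare the two expansions of $\Omega$. Since $\{s_\lambda(\xx)\}_{\lambda\in\Par}$ is a basis of $\Sym$, the coefficient of $s_\lambda(\xx)$ in a homogeneous element of $\Sym\otimes(\UU/\II_{\plac})$ is well defined, and extracting it gives $\sch_\lambda(\uu)\equiv\sum_{T\in\SSYT(\lambda,N)}u_{\col(T)}\pmod{\II_{\plac}}$, as desired (in particular $\sch_\lambda(\uu)\equiv 0$ when $\lambda$ has more than $N$ rows). The main obstacle is really just the classical quasisymmetric identity quoted above; everything else is bookkeeping with $\phi$ and the pairing $\langle-,-\rangle$. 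As an alternative one could reproduce Fomin and Greene's original argument---expand the noncommutative Jacobi--Trudi determinant as a signed sum over tuples of strictly decreasing words regarded as columns, and cancel the terms not coming from a tableau by a Lindström--Gessel--Viennot-style ``tail swap'' at the first offending pair of adjacent columns---where the delicate point is checking that each such swap replaces the word by a Knuth-equivalent one, so that the cancellation survives modulo $\II_{\plac}$.
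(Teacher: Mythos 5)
Your argument is correct, but it takes a genuinely different route from the source the paper relies on: the paper gives no proof of this statement, citing Fomin--Greene (Lemma~3.2 of \cite{FG98}), whose argument is essentially the ``alternative'' you sketch at the end --- expand the noncommutative Jacobi--Trudi determinant defining \( \sch_\lambda(\uu) \) and cancel non-tableau terms by a sign-reversing, Lindstr\"om--Gessel--Viennot-style swap that one checks is compatible with the Knuth relations. Your main proof instead compares two expansions of \( \Omega(\xx,\uu) \) modulo \( \II_{\plac}[[\xx]] \): one obtained by pushing the commutative Cauchy identity through \( \phi \) (giving \( \sum_\lambda s_\lambda(\xx)\sch_\lambda(\uu) \)), and one obtained from \eqref{eq:Omega=Fw} by grouping words by insertion tableau via Theorem~\ref{thm:Knuth_equiv} and Lemma~\ref{lem:P(col(T))=T}, and then invoking the classical Gessel/RSK facts \( \Des(\ww)=\Des(Q(\ww)) \) and \( s_\mu=\sum_{Q\in\SYT(\mu)}F_{|\mu|,\Des(Q)} \); the coefficient extraction at the end is legitimate since everything is homogeneous and the \( s_\lambda(\xx) \) are linearly independent over \( \QQ \). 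Note that this reverses the paper's logical order: the paper deduces \( F_{\gamma_T}(\xx)=s_\lambda(\xx) \) (equation \eqref{eq:F_gamma=schur}) \emph{from} the present theorem, whereas you prove that identity by classical means and deduce the theorem from it. This is not circular, because Gessel's quasisymmetric expansion and the descent-preservation of RSK are established independently of Fomin--Greene, but it does import the full strength of RSK/Knuth theory as a black box; what Fomin--Greene's determinant-cancellation proof buys is a more self-contained argument within the noncommutative framework, which is why the paper (and \cite{FG98}) can then run the implication in the other direction to recover \( F_{\gamma_T}=s_\lambda \) and the Littlewood--Richardson rule.
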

From Theorem~\ref{thm:Omega=gf}, Lemma~\ref{lem:P(col(T))=T},
Theorems~\ref{thm:Knuth_equiv} and
\ref{thm:sch=sum_T}, we deduce the following theorem.
\begin{thm}
  Let \( T \) be a semistandard Young tableau of shape \( \lambda \).
  Then we have
  \begin{equation} \label{eq:F_gamma=schur}
    F_{\gamma_T}(\xx) = s_{\lambda}(\xx).
  \end{equation}
\end{thm}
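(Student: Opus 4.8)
The plan is to run the strategy indicated right after Theorem~\ref{thm:Omega=gf}, with the plactic ideal \( \II_{\plac} \) in the role of \( \II \) and the self-duality \eqref{eq:C=ss} of the Schur basis (so that \( f_\lambda = g_\lambda = s_\lambda \)) in the role of the Cauchy pairing. The first step I would carry out is to obtain the noncommutative Cauchy expansion in the Schur basis,
\[
  \Omega(\xx,\uu)\equiv \sum_{\lambda\in\Par} s_\lambda(\xx)\,\sch_\lambda(\uu)\mod \II_{\plac}[[\xx]].
\]
Since \( \II_{\plac} \) satisfies the commutation relation (by the quoted lemma of \cite{FG98}), Proposition~\ref{prop:commutation} makes \( \phi\colon\Sym\to\UU/\II_{\plac} \) an algebra homomorphism, and hence \( \sch_\lambda(\uu)=\phi(s_\lambda(\xx)) \). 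Observing that \( C(\xx,\yy)=\prod_i H(x_i,\yy) \) and \( \Omega(\xx,\uu)=\prod_i H(x_i,\uu) \) with \( \phi(h_\ell(\yy))=\hh_\ell(\uu) \), the series \( \Omega(\xx,\uu) \) is exactly the image of \( C(\xx,\yy) \) under \( \phi \) applied in the \( \yy \)-variables; applying this to \eqref{eq:C=ss} yields the displayed congruence.

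Next, since \( \gamma_T\in\II_{\plac}^\perp \) (noted just before the statement), Theorem~\ref{thm:Omega=gf} applies with \( g_\lambda=s_\lambda \) and \( \ff_\lambda=\sch_\lambda \), giving
\[
  F_{\gamma_T}(\xx)=\sum_{\mu\in\Par} s_\mu(\xx)\,\langle \sch_\mu(\uu),\gamma_T\rangle.
\]
It then remains to evaluate each coefficient. Because \( \gamma_T\in\II_{\plac}^\perp \), the pairing \( \langle-,\gamma_T\rangle \) kills \( \II_{\plac} \) and so depends only on \( \sch_\mu(\uu) \) modulo \( \II_{\plac} \); thus Theorem~\ref{thm:sch=sum_T} gives
\[
  \langle \sch_\mu(\uu),\gamma_T\rangle=\sum_{U\in\SSYT(\mu,N)}\langle u_{\col(U)},\gamma_T\rangle
  =\#\{U\in\SSYT(\mu,N) : P(\col(U))=T\}.
\]
By Lemma~\ref{lem:P(col(T))=T} we have \( P(\col(U))=U \), so the count is the number of \( U\in\SSYT(\mu,N) \) equal to \( T \); this equals \( 1 \) when \( \mu=\lambda \) (assuming \( N \) is at least the largest entry of \( T \), so that \( T\in\SSYT(\lambda,N) \)) and \( 0 \) otherwise. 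Substituting back gives \( F_{\gamma_T}(\xx)=s_\lambda(\xx) \).

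The one step I would treat most carefully — and the only place any thought is needed — is the identification in the first paragraph of \( \Omega(\xx,\uu) \) as the \( \phi \)-image of the Cauchy product, which is what lets the Schur self-duality \eqref{eq:C=ss} be transported to a congruence modulo \( \II_{\plac}[[\xx]] \); this is the analogue of how \eqref{eq:Omega=mh} was derived from \( C(\xx,\yy)=\sum_\lambda m_\lambda(\xx)h_\lambda(\yy) \). Everything after that is bookkeeping against the RSK facts already available (Lemma~\ref{lem:P(col(T))=T}, Theorems~\ref{thm:Knuth_equiv} and~\ref{thm:sch=sum_T}) together with the defining property of \( \gamma_T \). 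I would also flag explicitly the harmless standing assumption that \( N \) exceeds the entries of \( T \), since otherwise \( \gamma_T=0 \) and the identity fails.
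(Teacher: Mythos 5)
Your proof is correct and takes essentially the same approach as the paper, whose ``proof'' is just the citation of Theorem~\ref{thm:Omega=gf}, Lemma~\ref{lem:P(col(T))=T}, and Theorems~\ref{thm:Knuth_equiv} and \ref{thm:sch=sum_T}; you have simply written out the implicit details (the congruence \( \Omega(\xx,\uu)\equiv\sum_\lambda s_\lambda(\xx)\sch_\lambda(\uu) \bmod \II_{\plac}[[\xx]] \) obtained via Proposition~\ref{prop:commutation} from \eqref{eq:C=ss}, and the evaluation \( \langle \sch_\mu(\uu),\gamma_T\rangle=\delta_{\mu\lambda} \)). Your explicit flag that the entries of \( T \) must lie in \( [N] \) (else \( \gamma_T=0 \)) is a tacit assumption in the paper and is worth noting.
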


Let us now derive the Littlewood--Richardson rule.
We say a word \( \ww \) a \emph{tableau word of shape \( \lambda \)} if \( \ww \) is
the column word of some SSYT \( T \) of shape \( \lambda \).
For partitions \( \mu \) and \( \nu \), we write
\[
  s_\mu(\xx) s_\nu(\xx) = \sum_{\lambda} c^\lambda_{\mu\nu} s_\lambda(\xx)
\]
and call \( c^{\lambda}_{\mu\nu} \) the \emph{Littlewood--Richardson coefficient}.
Thanks to \eqref{eq:skewing_s=s_skew}, \eqref{eq:a_perp=sum_db} and
the duality of Schur functions~\eqref{eq:C=ss}, we have
\begin{equation} \label{eq:skew_s=LR}
  s_\mu^\perp s_\lambda(\xx)
    = s_{\lambda/\mu}(\xx) = \sum_{\nu} c^\lambda_{\mu\nu} s_\nu(\xx).
\end{equation}
We therefore present the Littlewood--Richardson rule, a combinatorial interpretation
for the Littlewood--Richardson coefficient, using Theorem~\ref{thm:skew_Omega}.
\begin{thm}
  Let \( \mu\vdash n,\nu\vdash m \) and \( \lambda\vdash n+m \), and 
  fix a semistandard Young tableau \( T \) of shape \( \lambda \).
  Then the Littlewood--Richardson coefficient \( c^\lambda_{\mu\nu} \) counts
  words \( \ww \) satisfying
  \begin{enumerate}[label=(\roman*)]
    \item \( P(\ww) = T \),
    \item \( \ww_1\cdots\ww_n \) is a tableau word of shape \( \mu \), and
    \item \( \ww_{n+1}\cdots\ww_{n+m} \) is a tableau word of shape \( \nu \).
  \end{enumerate}
\end{thm}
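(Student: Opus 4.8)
The plan is to apply Theorem~\ref{thm:skew_Omega} with the two bases being the Schur basis (which is self-dual by \eqref{eq:C=ss}), so that \( f_\lambda = g_\lambda = s_\lambda \) and \( \ff_\lambda(\uu) = \sch_\lambda(\uu) \), working modulo the ideal \( \II_{\plac} \), which satisfies the commutation relation by Lemma~\ref{lem:P(col(T))=T}'s companion (\cite[Lemma~3.1]{FG98}). Fix the SSYT \( T \) of shape \( \lambda \); then \( \gamma_T \in \II_{\plac}^\perp \) and \( F_{\gamma_T}(\xx) = s_\lambda(\xx) \) by \eqref{eq:F_gamma=schur}. The last sentence of Theorem~\ref{thm:skew_Omega} tells us that the coefficient of \( s_\nu(\xx) \) in \( s_\mu^\perp s_\lambda(\xx) \) is \( \langle \sch_\mu(\uu)\sch_\nu(\uu), \gamma_T \rangle \); combined with \eqref{eq:skew_s=LR}, this identifies \( c^\lambda_{\mu\nu} = \langle \sch_\mu(\uu)\sch_\nu(\uu), \gamma_T \rangle \). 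So the entire problem reduces to computing this pairing combinatorially.

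The next step is to expand the product \( \sch_\mu(\uu)\sch_\nu(\uu) \) using Theorem~\ref{thm:sch=sum_T}. Since \( \II_{\plac} \) satisfies the commutation relation, we may work modulo \( \II_{\plac} \) throughout, and
\[
  \sch_\mu(\uu)\sch_\nu(\uu) \equiv \sum_{P\in\SSYT(\mu,N)} \sum_{Q\in\SSYT(\nu,N)} u_{\col(P)}\, u_{\col(Q)} \mod \II_{\plac}.
\]
Now \( u_{\col(P)} u_{\col(Q)} = u_{\ww} \) where \( \ww = \col(P)\col(Q) \) is the concatenation, a word of length \( n+m \) whose first \( n \) letters form \( \col(P) \) (a tableau word of shape \( \mu \)) and whose last \( m \) letters form \( \col(Q) \) (a tableau word of shape \( \nu \)). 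Pairing against \( \gamma_T = \sum_{P(\vv)=T}\vv \) and using \( \langle u_\ww, \vv\rangle = \delta_{\ww,\vv} \), we get that \( \langle \sch_\mu(\uu)\sch_\nu(\uu), \gamma_T \rangle \) counts precisely the words \( \ww \) of length \( n+m \) such that \( \ww_1\cdots\ww_n \) is a tableau word of shape \( \mu \), \( \ww_{n+1}\cdots\ww_{n+m} \) is a tableau word of shape \( \nu \), and \( P(\ww) = T \) — which is exactly conditions (i), (ii), (iii). One subtlety: I must check that the representatives \( u_{\col(P)} u_{\col(Q)} \) I am summing over are genuinely distinct as words before pairing, i.e. that distinct pairs \( (P,Q) \) give distinct concatenated words — this is immediate since \( P \) is recovered from the first \( n \) letters and \( Q \) from the last \( m \) by Lemma~\ref{lem:P(col(T))=T}, as \( P(\col(P)) = P \) and \( P(\col(Q)) = Q \).

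The main obstacle is a bookkeeping point rather than a conceptual one: I need to make sure the pairing \( \langle \sch_\mu(\uu)\sch_\nu(\uu),\gamma_T\rangle \) is evaluated on an honest element of \( \UU \) (not just a congruence class) so that the count is well-defined. This is where the hypothesis \( \gamma_T \in \II_{\plac}^\perp \) does the work: although \( \sch_\mu(\uu)\sch_\nu(\uu) \) is only determined modulo \( \II_{\plac} \), the pairing with any \( \gamma \in \II_{\plac}^\perp \) is independent of the chosen representative, so using the explicit representative \( \sum_{P,Q} u_{\col(P)\col(Q)} \) is legitimate. Once this is in place, the bijection ``word \( \ww \) satisfying (i)--(iii)'' \( \leftrightarrow \) ``pair \( (P(\ww_1\cdots\ww_n),\, P(\ww_{n+1}\cdots\ww_{n+m})) \) of shape \( (\mu,\nu) \) with \( P(\ww)=T \)'' is tautological, and matching the count against the coefficient extracted from Theorem~\ref{thm:skew_Omega} and \eqref{eq:skew_s=LR} finishes the proof. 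I would also remark that the count is manifestly independent of the choice of \( T \) within its shape, recovering the well-known fact that \( c^\lambda_{\mu\nu} \) depends only on \( \lambda \); with \( T \) taken to be the superstandard tableau this reduces to the classical Littlewood--Richardson rule in its reading-word form.
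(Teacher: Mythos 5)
Your proposal is correct and follows essentially the same route as the paper: apply Theorem~\ref{thm:skew_Omega} with the self-dual Schur basis and the ideal \( \II_{\plac} \) to get \( c^\lambda_{\mu\nu} = \langle \sch_\mu(\uu)\sch_\nu(\uu), \gamma_T \rangle \), then use Theorem~\ref{thm:sch=sum_T} to interpret this pairing as a count of concatenations of tableau words Knuth-equivalent to \( \col(T) \). Your added remarks on the well-definedness of the pairing modulo \( \II_{\plac} \) and the injectivity of \( (P,Q)\mapsto\col(P)\col(Q) \) are correct details that the paper leaves implicit.
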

\begin{proof}
  By Theorem~\ref{thm:skew_Omega}, \eqref{eq:F_gamma=schur} and \eqref{eq:skew_s=LR},
  the coefficient can be computed as follows:
  \[
    c^\lambda_{\mu\nu} = \langle \sch_\mu(\uu) \sch_\nu(\uu), \gamma_T \rangle.
  \]
  Since \( \sch_\rho(\uu) \) is the generating function for tableau words of
  shape \( \rho \), and \( \gamma_T \) is the generating function for words \( \ww \)
  such that \( P(\ww)=T \), we have the proof.
\end{proof}

\subsection{Chromatic quasisymmetric functions} \label{sec:chrom_sym}
In \cite{Hwa22, BEPS22}, the authors settled a foundation of the theory
of noncommutative symmetric functions for posets.
Using the theory, they studied several expansions of chromatic quasisymmetric
functions of natural unit interval orders
(and more generally, \( \mathbf{(3+1)} \)-free posets).
In this section, we apply Theorem~\ref{thm:skew_Omega} to their setting,
and provide two recurrence relations \eqref{eq:recur_e_perp} and
\eqref{eq:recur_p_perp} between chromatic quasisymmetric functions.
One of the relations is a generalization of Harada--Precup's recurrence relation.
These two relations do not give
the positivity of the coefficients \( c^{P,\beta}_\lambda(q) \),
but we hope that they provide intuition for finding a combinatorial
interpretation of the coefficients.

We first review the definition of chromatic quasisymmetric functions and
the noncommutative symmetric function theory for posets.
Throughout this section, we stand \( P \) for a poset on \( [N] \), and denote
\( >_P \) the partial order.
For \( a,b\in [N] \), we also write \( a\sim_P b \)
if \( a=b \) or they are incomparable in \( P \).
For a word \( \ww \) of length \( d \) on alphabet \( [N] \), define
the \emph{\( P \)-descent set} of \( \ww \)
\[
  \Des_P(\ww) = \{i\in[d-1] \mid \ww_i >_P \ww_{i+1}  \},
\]
and the \emph{\( P \)-inversion number} of \( \ww \)
\[
  \inv_P(\ww) = | \{(i,j)\in [d]\times[d] \mid \mbox{\( i<j \), \( \ww_i>\ww_j \), and \( \ww_i\sim_P \ww_j \)} \} |.
\]
For \( \beta\in\NN^N \), let \( W(\beta) \) be the set of all words of
content \( \beta \) on alphabet \( [N] \), i.e., each letter \( i \) occurs
\( \beta_i \) times in the word for all \( i\in [N] \).
The \emph{chromatic quasisymmetric function} \( \omega X_P(\xx;q,\beta) \)
of \( P \) of type \( \beta \) is defined by
\[
  \omega X_P(\xx;q,\beta) = \sum_{\ww\in W(\beta)} q^{\inv_P(\ww)} F_{d,\Des_P(\ww)}(\xx).
\]

For \( k\ge 1 \), the \emph{noncommutative \( P \)-elementary symmetric function}
\( \ee_k^P(\uu) \) is defined by
\[
  \ee_k^P(\uu) = \sum_{i_1>_P\cdots >_P i_k} u_{i_1}\cdots u_{i_k} \in \UU.
\]
We also define \( P \)-analogues of the noncommutative symmetric functions
\( \hh_\lambda(\uu), \pp_k(\uu) \) and \( \mono_\lambda(\uu) \) in the same way
using \( \ee_k^P(\uu) \) instead of \( \ee_k(\uu) \).
Similar to \eqref{eq:hh=increasing}, we then easily obtain
\[
  \hh_k^P(\uu) = \sum_{i_1 \ngtr_P \cdots \ngtr_P i_k} u_{i_1}\cdots u_{i_k}.
\]
The \emph{noncommutative \( P \)-Cauchy product} is
\[
  \Omega^P(\xx,\uu) = H^P(x_1,\uu) H^P(x_2,\uu) \cdots \in \UU[[\xx]],
\]
where \( H^P(x,\uu) = \sum_{\ell\ge 0} x^\ell \hh^P_\ell(\uu) \).
We can therefore write \( \Omega^P(\xx,\uu) \) as follows:
\[
  \Omega^P(\xx,\uu) = \sum_\ww u_\ww F_{d, \Des_P(\ww)}(\xx),
\]
where the sum ranges over all words on \( [N] \), and \( d \) is the length of
\( \ww \).
For \( \gamma\in\UU^* \) (or \( \gamma\in\ZZ[q]\otimes \UU^* \)),
write
\[
  F^P_\gamma(\xx) = \langle \Omega^P(\xx,\uu), \gamma \rangle.
\]
Therefore, letting
\[
  \gamma_\beta = \sum_{\ww\in W(\beta)} q^{\inv_P(\ww)} \ww,
\]
we have
\[
  \omega X_P(\xx;q,\beta) = F^P_{\gamma_\beta}(\xx).
\]
We say that a 2-sided ideal \( \II \) of \( \UU \) \emph{satisfies the \( P \)-commutation relation} if for any \( k,\ell \ge 1 \),
\[
  \ee_k^P(\uu) \ee_\ell^P(\uu) \equiv \ee_\ell^P(\uu) \ee_k^P(\uu) \mod \II.
\]
Let \( \phi^P \) be a mapping
\[
  e_k(\xx)\in\Sym \mapsto \ee_k^P(\uu)\in\UU
\]
for all \( k \ge 1 \).
Then a very similar way to the proof of Theorem~\ref{thm:skew_Omega}
gives the following \( P \)-analogue of the theorem.
\begin{thm} \label{thm:skew_Omega_P}
  Choose a symmetric function \( f(\xx)\in\Sym \).
  Let \( \II \) be a 2-sided ideal satisfying the \( P \)-commutation relation,
  and \( \ff^P(\uu) := \phi^P(f(\xx)) \).
  Let \( \gamma\in\II^\perp \), then we have
  \[
    f^\perp F^P_\gamma(\xx) = \langle \ff^P(\uu) \Omega^P(\xx,\uu), \gamma \rangle.
  \]
\end{thm}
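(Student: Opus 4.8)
The plan is to mirror the proof of Theorem~\ref{thm:skew_Omega} almost verbatim, replacing every object by its $P$-analogue. First I would establish the $P$-version of Proposition~\ref{prop:commutation}: since $\II$ satisfies the $P$-commutation relation, the $\ee_k^P(\uu)$ commute modulo $\II$, and because $\Sym$ is the free commutative $\QQ$-algebra on $e_1(\xx), e_2(\xx), \dots$, the assignment $e_k(\xx)\mapsto \ee_k^P(\uu)$ extends to an algebra homomorphism $\phi^P\colon \Sym\to \UU/\II$. Consequently the image under $\phi^P$ of any polynomial identity among symmetric functions holds as a congruence modulo $\II$. In particular, applying $\phi^P$ (in the $\uu$-variables) to the Cauchy-product variant~\eqref{eq:cauchy_product_skew} with $f_\mu$ replaced by an arbitrary $f(\xx)$ and using the factorization $\Omega^P(\xx,\uu)=H^P(x_1,\uu)H^P(x_2,\uu)\cdots$, I would obtain the congruence $\ff^P(\uu)\,\Omega^P(\xx,\uu)\equiv \sum_{\lambda} \ff^P_\lambda(\uu)\, f^\perp g_\lambda(\xx)\ \mod \II[[\xx]]$, where $\{g_\lambda\}$ is any basis dual to $\{f_\lambda\}$ and $\ff^P_\lambda(\uu)=\phi^P(f_\lambda(\xx))$; taking $f=f_\mu$ a basis element recovers the exact $P$-analogue of~\eqref{eq:skew_Omega}.

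Next I would pair both sides against $\gamma\in\II^\perp$. On the left, $\langle \ff^P(\uu)\,\Omega^P(\xx,\uu),\gamma\rangle$ is exactly the right-hand side of the claimed identity. On the right, I need $\langle \ff^P_\lambda(\uu)\, f^\perp g_\lambda(\xx),\gamma\rangle = f^\perp g_\lambda(\xx)\cdot \langle \ff^P_\lambda(\uu),\gamma\rangle$ (the scalar $f^\perp g_\lambda(\xx)$ pulls out of the pairing, which is $\ZZ[q]$-linear in the $\UU^*$-slot), and then summing over $\lambda$ gives $\sum_\lambda f^\perp g_\lambda(\xx)\,\langle \ff^P_\lambda(\uu),\gamma\rangle = f^\perp\!\big(\sum_\lambda g_\lambda(\xx)\,\langle \ff^P_\lambda(\uu),\gamma\rangle\big)$ since $f^\perp$ is linear. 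The inner sum is precisely $F^P_\gamma(\xx)$ by the $P$-analogue of Theorem~\ref{thm:Omega=gf}, which holds because $\Omega^P(\xx,\uu)\equiv\sum_\lambda g_\lambda(\xx)\ff^P_\lambda(\uu)\mod\II[[\xx]]$ (apply $\phi^P$ to the Cauchy formula $C(\xx,\yy)=\sum_\lambda f_\lambda(\yy)g_\lambda(\xx)$) together with $\gamma\in\II^\perp$. Stringing these equalities together yields $f^\perp F^P_\gamma(\xx) = \langle \ff^P(\uu)\,\Omega^P(\xx,\uu),\gamma\rangle$, as desired.

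The step I expect to require the most care is the justification that $F^P_\gamma(\xx)$ is a symmetric function and that the $P$-analogue of Theorem~\ref{thm:Omega=gf} genuinely goes through: in the poset setting $\Omega^P(\xx,\uu)$ is built from the $\hh_k^P(\uu)$, which are sums over $P$-weakly-increasing index sequences rather than honestly weakly increasing ones, so the quasisymmetric expansion $\Omega^P(\xx,\uu)=\sum_\ww u_\ww F_{d,\Des_P(\ww)}(\xx)$ must be checked to interact correctly with $\II$ and with the congruence coming from $\phi^P$. This is exactly the content of the foundational results of~\cite{Hwa22, BEPS22}, which the excerpt tells us to take as given; the one thing I would verify explicitly is that the commutation hypothesis is the right one — namely that $\phi^P$ lands in $\UU/\II$ as an algebra map — so that \emph{every} polynomial identity in $\Sym$, in particular~\eqref{eq:cauchy_product_skew}, transfers. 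Everything else is formal manipulation of the $\ZZ[q]$-bilinear pairing, parallel to the commutative case, and carries no new obstacle.
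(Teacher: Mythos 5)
Your proposal is correct and follows essentially the same route as the paper, which gives no separate argument for Theorem~\ref{thm:skew_Omega_P} beyond noting that the proof of Theorem~\ref{thm:skew_Omega} carries over verbatim with $\phi^P$, $\Omega^P(\xx,\uu)$, and the $P$-commutation relation in place of their ordinary counterparts, together with the $P$-analogue of Theorem~\ref{thm:Omega=gf} from \cite{Hwa22,BEPS22}. Your added care about why $F^P_\gamma(\xx)$ is symmetric and why $\phi^P$ is an algebra map modulo $\II$ is exactly the right thing to cite from those references, and the extension from basis elements $f_\mu$ to arbitrary $f$ by linearity is unproblematic.
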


We now focus on a certain class of posets.
A \emph{natural unit interval order} \( P \) on \( [N] \) is a poset satisfying that
\begin{enumerate}
  \item if \( a<_P b \), then \( a < b \);
  \item if \( a <_P c \), \( a\sim_P b \), and \( b\sim_P c \), then \( a<b<c \).
\end{enumerate}
In general, \( \omega X_P(\xx;q,\beta) \) is quasisymmetric,
but Shareshian and Wachs~\cite{SW16} showed that the chromatic quasisymmetric
function of a natural unit interval order is symmetric.
\begin{rmk}
  Natural unit interval orders have another characterization:
  a poset \( P \) is a natural unit interval order if and only if
  \( P \) is \(\mathbf{(3+1)}\)-free and \( \mathbf{(2+2)} \)-free~\cite{SS58}.
  In \cite{Sta95}, introducing chromatic symmetric functions,
  Stanley concentrated his focus on \( \mathbf{(3+1)} \)-free posets.
  But, unfortunately, there is no `natural' way to label elements of the posets,
  and thus one cannot define the chromatic quasisymmetric functions of
  \( \mathsf{(3+1)} \)-free posets canonically.
  Therefore, Shareshian and Wachs~\cite{SW16} considered only natural unit
  interval orders. We will also deal with natural unit interval orders because
  we would like to state our recurrences with \( q \)-weights.
  Nevertheless, we highlight that all definitions and results appearing in what
  follows can be extended to \( \mathbf{(3+1)}\)-free posets with \( q=1 \).
\end{rmk}

For a natural unit interval order \( P \),
we define \( \II_P \) to be the 2-sided ideal of \( \UU \) generated by
\begin{align}
  & u_a u_c - u_c u_a && \mbox{(\( a<_P c  \))}, \label{rel:ideal_P_1} \\
  & u_b u_a u_c - u_a u_c u_b && \mbox{(\( a\sim_P b, b\sim_P c\) and \( a<_P c \))}.
  \label{rel:ideal_P_2}
\end{align}
Then we have an important proposition.
\begin{prop}[{\cite[Theorem~4.22]{Hwa22},~\cite[Proposition~3.24]{BEPS22}}]
  For a natural unit interval order \( P \),
  the ideal \( \II_P \) satisfies the \( P \)-commutation relation.
\end{prop}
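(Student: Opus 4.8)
The plan is to verify the $P$-commutation relation
$\ee_k^P(\uu)\ee_\ell^P(\uu)\equiv\ee_\ell^P(\uu)\ee_k^P(\uu)\pmod{\II_P}$
directly, by exhibiting a sign-reversing involution that pairs up the monomials
of $\ee_k^P(\uu)\ee_\ell^P(\uu)-\ee_\ell^P(\uu)\ee_k^P(\uu)$ into pairs that
are congruent modulo the generators \eqref{rel:ideal_P_1} and
\eqref{rel:ideal_P_2}. Concretely, a monomial of $\ee_k^P(\uu)\ee_\ell^P(\uu)$
is $u_{i_1}\cdots u_{i_k}u_{j_1}\cdots u_{j_\ell}$ with
$i_1>_P\cdots>_P i_k$ and $j_1>_P\cdots>_P j_\ell$; I would encode such a term
as a pair of $P$-strictly-decreasing chains, draw them as two columns of a
filling, and mimic the classical Lindström--Gessel--Viennot / Bender--Knuth
style argument that proves $e_ke_\ell=e_\ell e_k$ in the commutative setting.
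The point is that in the commutative world the identity is trivial, and the
noncommutative version should follow once we check that each elementary
``exchange'' of entries between the two columns can be realized, at the level of
words, by a sequence of the moves \eqref{rel:ideal_P_1}--\eqref{rel:ideal_P_2}.

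More precisely, the key steps are: (1) Reduce to a local statement. Since every
term on both sides is a product of two $P$-decreasing chains, it suffices to
understand how to transport a single letter from the end of one chain past the
other chain using the relations in $\II_P$; one shows that
$u_a u_b u_c\equiv u_a u_c u_b$ and similar rearrangements hold modulo $\II_P$
whenever the relevant comparabilities force them (this is exactly where
\eqref{rel:ideal_P_2}, together with the two defining axioms of a natural unit
interval order, is used). (2) Set up the involution: given a monomial of
$\ee_k^P(\uu)\ee_\ell^P(\uu)$, find the first place where the two-column picture
fails to be ``reverse lattice'' in the appropriate $P$-sense, and swap the
offending entries between the columns, producing a monomial of
$\ee_\ell^P(\uu)\ee_k^P(\uu)$ (or a monomial of $\ee_k^P(\uu)\ee_\ell^P(\uu)$
again, with the roles interchanged), and check the swap changes the word only by
moves in $\II_P$. (3) Check the map is an involution and matches the two
products, so the difference vanishes modulo $\II_P$. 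Alternatively, and perhaps
more cleanly, I would try to deduce the $P$-commutation relation from the
already-established plactic-type results: show that there is an algebra map from
the plactic-flavored quotient $\UU/\II_{\plac}$-analogue adapted to $P$ onto
$\UU/\II_P$, or directly that $\II_P$ contains enough relations that the
$P$-Knuth classes are preserved, and then invoke the commutativity of the
$\ee_k^P$ in that larger quotient. Both \cite{Hwa22} and \cite{BEPS22} are
cited for this statement, so I would follow whichever of the two arguments is
shorter to reproduce.

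The main obstacle I anticipate is step (1): verifying that the single-letter
transport moves are actually generated by \eqref{rel:ideal_P_1} and
\eqref{rel:ideal_P_2}, and nothing more is needed. The subtlety is that the
generator \eqref{rel:ideal_P_2} only allows the rearrangement
$u_bu_au_c\mapsto u_au_cu_b$ under the hypotheses $a\sim_P b$, $b\sim_P c$,
$a<_P c$; when one tries to slide a letter across an entire $P$-decreasing
chain, one must decompose the passage into a product of such elementary moves,
and at each intermediate step the three letters involved must satisfy the
hypotheses. This is precisely where axiom (2) in the definition of a natural
unit interval order (``if $a<_P c$, $a\sim_P b$, $b\sim_P c$, then $a<b<c$'')
does the work: it rules out configurations in which a letter that is
$P$-incomparable to two others would sit between them numerically in a way that
blocks the move. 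So the heart of the proof is a careful case analysis on the
order relations among the three (or four) letters being permuted, using the two
axioms to certify that each needed move lies in $\II_P$; the bookkeeping is
routine once the right normal form for $P$-decreasing chains is fixed, but
getting that bookkeeping exactly right is where the care is required. I would
also double-check the boundary cases $k=\ell$, $k=1$, or $\ell=1$ separately, as
they are easy but easy to overlook.
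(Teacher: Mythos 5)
First, note that the paper does not actually prove this proposition: it is imported verbatim from \cite{Hwa22} (Theorem~4.22) and \cite{BEPS22} (Proposition~3.24), so there is no in-paper argument to match your proposal against; any proof you give has to stand on its own.

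Judged on its own, your proposal is a strategy outline with the decisive step missing. The entire content of the proposition is the claim that the rearrangements needed to turn a product of a \(P\)-decreasing \(k\)-chain and a \(P\)-decreasing \(\ell\)-chain into a product in the opposite order are consequences of the two families of generators \eqref{rel:ideal_P_1} and \eqref{rel:ideal_P_2}; you defer exactly this to a ``careful case analysis'' whose bookkeeping you call routine, so no part of the actual verification is carried out. Moreover, the reduction you propose in step (1) --- transporting a single letter from one chain past the other --- is doubtful as stated: the ideal \( \II_P \) contains no relation commuting two adjacent \emph{incomparable} letters, so a letter that is \( \sim_P \)-incomparable to several consecutive letters of the other chain cannot in general be slid across them by \eqref{rel:ideal_P_1}, and \eqref{rel:ideal_P_2} only applies to very specific three-letter patterns (\( a\sim_P b \), \( b\sim_P c \), \( a<_P c \)); it is not clear, and you do not argue, that every needed passage decomposes into such moves. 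Indeed one should not expect a naive term-by-term congruence between monomials of \( \ee_k^P(\uu)\ee_\ell^P(\uu) \) and \( \ee_\ell^P(\uu)\ee_k^P(\uu) \) without first proving a structural statement about which pairs of chains are congruent modulo \( \II_P \) (in the plactic case this is supplied by RSK/Knuth theory, which has no off-the-shelf analogue here); your alternative suggestion of mapping through a plactic-type quotient likewise presupposes a \(P\)-Knuth theory that is not available and would itself need proof. So the heart of the theorem --- the case analysis over the order relations among the letters being permuted, organized so that every intermediate configuration satisfies the hypotheses of \eqref{rel:ideal_P_2}, using both axioms of a natural unit interval order --- remains to be done, and until it is, the argument is not a proof but a plan whose feasibility is precisely what is in question.
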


For \( \beta\in\NN^N \), we write
\begin{equation} \label{eq:XP=ch}
  \omega X_P(\xx;q,\beta) = \sum_{\lambda} c_\lambda^{P,\beta}(q) h_\lambda(\xx).
\end{equation}
The positivity of \( c^{P,\beta}_\lambda(q) \) is one of the most famous conjecture
in algebraic combinatorics.
We present recurrences for the coefficients \( c^{P,\beta}_\lambda(q) \) using skewing
operators.
\begin{rmk}
  In \cite{Hwa22}, the author introduced an equivalence relation \( \sim \) on 
  words in \( W(\beta) \), and studied the symmetric function associated to
  each equivalence class (called a \( H \)-graph in \cite{BEPS22}),
  instead of \( \omega X_P(\xx;q,\beta) \).
  More precisely, for an equivalence class \( [\ww]\in \equivclass{W(\beta)} \),
  let
  \[
    \gamma_{[\ww]} = \sum_{\ww'\in[\ww]} \ww' \qand
    F^P_{\gamma_{[\ww]}}(\xx) = \langle \Omega^P(\xx,\uu), \gamma_{[\ww]} \rangle,
  \]
  and he showed several positivities of \( F^P_{\gamma_{[\ww]}}(\xx) \).
  By definition, the symmetric functions \( F^P_{\gamma_{\ww}}(\xx) \) can be
  thought of as refinements of \( \omega X_P(\xx;q,\beta) \), and  the positivity of
  \(F^P_{\gamma_{[\ww]}}(\xx) \) implies the positivity of \( \omega X_P(\xx;q,\beta) \).
  In fact, the generator \eqref{rel:ideal_P_2} of the ideal \( \II_P \) is just an
  algebraic expression of the equivalence relation. While the recurrences we will
  present hold not only for the chromatic quasisymmetric functions,
  but also for the refinements,
  we only state them at the level of the chromatic quasisymmetric function.
  We believe the the reader can easily reformulate the recurrences
  for \( F^P_{\gamma_{[\ww]}}(\xx) \).
\end{rmk}

We first consider the skewing operator \( e_k^\perp \).
For \( \beta\in\NN^N \) and \( i\in [N] \), define
\[
  \deg_P(i,\beta) = \sum_{k\in [N]} \beta_i \chi(\mbox{\( i>k \) and \( i\sim_P k \)}),
\]
where \( \chi(Q) \) is the characteristic map, that is, 1 if property \( Q \) is
true, and 0 otherwise. Moreover, for a multiset \( S \) of letters in \( [N] \),
define
\[
  \deg_P(S, \beta) = \sum_{i\in S} \deg_P(i, \beta).
\]
In addition, let
\[
  \beta_S = ({\beta_S}_1,\dots, {\beta_S}_N)\in\NN^N
\]
such that \( {\beta_S}_i \) equals the number of appearances of \( i \) in \( S \).
For \( k\ge 1 \), we denote by \( C_k(P) \) the set of chains of length \( k \)
in \( P \). By convention, if \( \beta \) contains a negative integer entry, 
then we let \( \omega X_P(\xx;q,\beta) = 0 \).
\begin{thm} \label{thm:recur_e_perp}
  Let \( P \) be a natural unit interval order, \( \beta\in\NN^N \), and
  \( d=\beta_1+\dots+\beta_N \).
  For an integer \( k\ge 1 \) and a partition \( \lambda\vdash d-k \), we have
  \begin{equation} \label{eq:recur_e_perp}
    \sum_{(\mu, S)} c^{P,\beta}_{\mu}(q)
    = \sum_{T\in C_k(P)} q^{\deg_P(T,\beta-\beta_T)} c^{P,\beta-\beta_T}_\lambda(q),
  \end{equation}
  where \( (\mu, S) \) is a pair of a partition \( \mu\vdash d \) and
  a \( k \)-subset \( S \) of \( [\ell(\mu)] \) such that \( \mu-\epsilon_S \)
  equals \( \lambda \), after reordering in weakly decreasing order.
\end{thm}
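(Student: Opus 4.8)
The plan is to apply Theorem~\ref{thm:skew_Omega_P} with the symmetric function $f(\xx) = e_k(\xx)$, the ideal $\II = \II_P$, and the cocycle $\gamma = \gamma_\beta$. First I would record that $\II_P$ satisfies the $P$-commutation relation (by the cited proposition), so $\ff^P(\uu) := \phi^P(e_k(\xx)) = \ee_k^P(\uu)$ is well-defined, and that $\gamma_\beta \in \II_P^\perp$ because the generators \eqref{rel:ideal_P_2} of $\II_P$ are exactly the algebraic shadow of the equivalence relation $\sim$ preserving $\inv_P$ and $W(\beta)$ (this is the content of the remark following \eqref{eq:XP=ch}, or can be checked directly: each generator relates two words of the same content with the same $P$-inversion number). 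Thus the hypotheses of Theorem~\ref{thm:skew_Omega_P} are met, and since $\omega X_P(\xx;q,\beta) = F^P_{\gamma_\beta}(\xx)$, we get
\[
  e_k^\perp\, \omega X_P(\xx;q,\beta) = \langle \ee_k^P(\uu)\, \Omega^P(\xx,\uu),\ \gamma_\beta \rangle.
\]

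Next I would evaluate both sides in the $h$-basis. For the left-hand side: applying $e_k^\perp$ to \eqref{eq:XP=ch} and invoking Proposition~\ref{prop:e_perp_h}, we obtain $e_k^\perp \omega X_P(\xx;q,\beta) = \sum_\mu c^{P,\beta}_\mu(q) \sum_S h_{\mu - \epsilon_S}(\xx)$, where $S$ runs over $k$-subsets of $[\ell(\mu)]$; collecting the terms in which $\mu - \epsilon_S$ rearranges to a fixed $\lambda \vdash d-k$ gives the coefficient $\sum_{(\mu,S)} c^{P,\beta}_\mu(q)$, which is the left-hand side of \eqref{eq:recur_e_perp}. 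For the right-hand side, I would use the $P$-analogue of \eqref{eq:Omega=mh}, namely $\Omega^P(\xx,\uu) \equiv \sum_\nu \mono^P_\nu(\uu)\, h_\nu(\xx) \bmod \II_P[[\xx]]$, so that the coefficient of $h_\lambda(\xx)$ on the right equals $\langle \ee_k^P(\uu)\, \mono^P_\lambda(\uu),\ \gamma_\beta \rangle$. Now $\ee_k^P(\uu)$ is a sum of monomials $u_{i_1}\cdots u_{i_k}$ with $i_1 >_P \cdots >_P i_k$ (a chain in $P$), and $\mono^P_\lambda(\uu)$ is, by the $P$-analogue of \eqref{eq:mono=mono}, a sum of monomials $u_{j_1}^{a_1}\cdots u_{j_m}^{a_m}$ over distinct rearrangements $(a_1,\dots,a_m)$ of $\lambda$ and indices with $j_1 \ngtr_P \cdots \ngtr_P j_m$; thus $\ee_k^P(\uu) \mono^P_\lambda(\uu)$ is the generating function for words $\ww = \ww_1\cdots\ww_d$ whose first $k$ letters form a strictly $P$-decreasing chain and whose last $d-k$ letters have $P$-non-descending index pattern with multiset of content-exponents equal to $\lambda$. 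Pairing with $\gamma_\beta$ then reads off exactly the words $\ww \in W(\beta)$ of this form, weighted by $q^{\inv_P(\ww)}$.

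The main work is the bookkeeping identity that reorganizes $\sum_{\ww} q^{\inv_P(\ww)}$ over such words into the right-hand side $\sum_{T \in C_k(P)} q^{\deg_P(T, \beta - \beta_T)} c^{P,\beta - \beta_T}_\lambda(q)$. Here I would argue: a word $\ww$ of the described shape is uniquely determined by (i) the length-$k$ chain $T = \ww_1 \cdots \ww_k \in C_k(P)$, and (ii) the tail $\ww' = \ww_{k+1}\cdots \ww_d$, which ranges over words in $W(\beta - \beta_T)$ whose index pattern is $P$-non-descending. The $P$-inversion number splits as $\inv_P(\ww) = \inv_P(\ww') + (\text{inversions between } T \text{ and } \ww') + \inv_P(T)$; since the letters of $T$ are pairwise $P$-comparable, $\inv_P(T) = 0$, and since every letter $t$ of $T$ is $P$-comparable to every letter $k'$ of $\ww'$ only when\dots --- more precisely, a pair $(t, k')$ with $t$ in $T$ and $k'$ in $\ww'$ contributes to $\inv_P(\ww)$ iff $t > k'$ and $t \sim_P k'$, and summing over all letters of $\ww'$ (whose content is $\beta - \beta_T$) gives exactly $\deg_P(T, \beta - \beta_T) = \sum_{t \in T}\deg_P(t,\beta-\beta_T)$. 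Therefore $\sum_{\ww} q^{\inv_P(\ww)} = \sum_{T \in C_k(P)} q^{\deg_P(T,\beta-\beta_T)} \sum_{\ww'} q^{\inv_P(\ww')}$, and the inner sum over $P$-non-descending-pattern words $\ww' \in W(\beta - \beta_T)$ with content-exponent multiset $\lambda$ is precisely $\langle \mono^P_\lambda(\uu), \gamma_{\beta - \beta_T}\rangle = c^{P, \beta - \beta_T}_\lambda(q)$, by the $h$-expansion $\omega X_P(\xx;q,\beta-\beta_T) = F^P_{\gamma_{\beta-\beta_T}}(\xx) = \sum_\nu \langle \mono^P_\nu(\uu), \gamma_{\beta-\beta_T}\rangle h_\nu(\xx)$.

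I expect the main obstacle to be making the factorization of $\inv_P(\ww)$ genuinely rigorous: one must verify that the "cross inversions" between the prefix chain $T$ and the suffix $\ww'$ depend only on the content of $\ww'$ (not its order), which uses that $T$'s letters are pairwise $P$-comparable together with the definition of $\inv_P$ counting pairs $(i,j)$ with $\ww_i > \ww_j$ and $\ww_i \sim_P \ww_j$; and one must double-check the convention that $\omega X_P(\xx;q,\beta') = 0$ when $\beta' = \beta - \beta_T$ has a negative entry, which is automatic since then no word of content $\beta'$ exists, so the corresponding term drops out. A minor technical point worth stating explicitly is why $\mono^P_\lambda(\uu)$ has the claimed monomial form modulo $\II_P$ --- this follows from Proposition~\ref{prop:commutation} (its $P$-version) applied to the identity expressing $m_\lambda$ in terms of the $e_k$, exactly as in the proof of the first lemma of Section~4.1, but now with the twist that the index pattern condition is "$P$-non-descending" rather than "weakly increasing."
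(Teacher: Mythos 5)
Your overall strategy is the right one, and large parts of it coincide with the paper's proof: the left-hand side is obtained exactly as in the paper (apply \( e_k^\perp \) to \eqref{eq:XP=ch} and use Proposition~\ref{prop:e_perp_h}), and the key combinatorial mechanism on the right-hand side --- splitting a word with a \( P \)-decreasing length-\( k \) prefix \( T \) and observing \( \inv_P(\ww) = \deg_P(T,\beta-\beta_T) + \inv_P(\ww_{k+1}\cdots\ww_d) \), with the cross-inversions depending only on the content of the suffix --- is precisely the paper's computation. However, there is a genuine error in how you set up the right-hand side. You extract the coefficient of \( h_\lambda(\xx) \) as \( \langle \ee_k^P(\uu)\mono_\lambda^P(\uu), \gamma_\beta\rangle \) and then assert a ``\( P \)-analogue of \eqref{eq:mono=mono}'': that \( \mono^P_\lambda(\uu) \) is congruent mod \( \II_P \) to the sum of monomials \( u_{j_1}^{a_1}\cdots u_{j_m}^{a_m} \) with \( j_1\ngtr_P\cdots\ngtr_P j_m \) and \( (a_i) \) a rearrangement of \( \lambda \). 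This is false. The argument you invoke for it (the \( P \)-version of Proposition~\ref{prop:commutation} ``as in Section~4.1'') does not apply, because \( \UU/\II_P \) is not commutative, unlike \( \UU/\II_0 \); the whole difficulty of the subject lives in this noncommutativity. Already for a one-row shape \( \lambda=(k) \) one has \( \mono^P_k(\uu)=\pp^P_k(\uu) \), whose correct monomial expression (Proposition~\ref{prop:pp_k=N}) requires the additional ``no nontrivial left-to-right \( P \)-maxima'' condition and contains words with incomparable adjacent letters, not just powers \( u_j^k \): e.g.\ for two incomparable elements \( 1\sim_P 2 \), \( \pp^P_2(\uu)=u_1^2+u_1u_2+u_2u_1+u_2^2 \not\equiv u_1^2+u_2^2 \bmod \II_P \). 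More tellingly, if your claimed formula were true for general \( \lambda \), then pairing it with \( \gamma_\beta \) would exhibit \( c^{P,\beta}_\lambda(q)=\langle\mono^P_\lambda(\uu),\gamma_\beta\rangle \) as a manifestly nonnegative sum, i.e.\ it would settle the \( h \)-positivity conjecture stated just before the theorem. Since both your expansion of \( \langle \ee_k^P\mono^P_\lambda,\gamma_\beta\rangle \) into words and your re-identification of the suffix sum as \( c^{P,\beta-\beta_T}_\lambda(q) \) rest on this false formula, the argument as written has a hole.

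The gap is repairable without new ideas, and the repair is essentially the paper's proof: do not pass through any monomial formula for \( \mono^P_\lambda(\uu) \). Either apply your prefix/suffix splitting directly to \( \langle \ee_k^P(\uu)\,\Omega^P(\xx,\uu),\gamma_\beta\rangle \), recognizing \( \langle\Omega^P(\xx,\uu),\gamma_{\beta-\beta_T}\rangle=\omega X_P(\xx;q,\beta-\beta_T) \) and only then expanding in the \( h \)-basis via \eqref{eq:XP=ch}; or, equivalently, note that the splitting identity \( \langle \ee_k^P(\uu)A(\uu),\gamma_\beta\rangle=\sum_{T\in C_k(P)}q^{\deg_P(T,\beta-\beta_T)}\langle A(\uu),\gamma_{\beta-\beta_T}\rangle \) holds for an \emph{arbitrary} homogeneous \( A(\uu) \) written as a sum of words, so you may take any representative of \( \mono^P_\lambda(\uu) \) and use \( \langle \mono^P_\lambda(\uu),\gamma_{\beta-\beta_T}\rangle=c^{P,\beta-\beta_T}_\lambda(q) \), which is independent of the chosen representative since \( \gamma_{\beta-\beta_T}\in\II_P^\perp \). (Minor point: when checking \( \gamma_\beta\in\II_P^\perp \) you should also verify invariance of \( \inv_P \) under the generators \eqref{rel:ideal_P_1}, not only \eqref{rel:ideal_P_2}; this is straightforward but should be said.)
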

\begin{proof}
  We compute \( e_k^\perp \omega X_P(\xx;q,\beta) \) in two different ways.
  On the one hand, by applying the skewing operator \( e_k^\perp \) to
  \eqref{eq:XP=ch} and Proposition~\ref{prop:e_perp_h}, we have
  \begin{align}
    e_k^\perp \omega X_P(\xx;q,\beta)
      &= \sum_{\mu\vdash d} c^{P,\beta}_\mu(q) e_k^\perp h_\mu(\xx) \nonumber \\
      &= \sum_{\mu\vdash d} c^{P,\beta}_\mu(q) \sum_{S\in\binom{[\ell]}{k}} h_{\mu-\epsilon_S}(\xx), \label{eq:e_perp_XP_1}
  \end{align}
  where \( \ell \) is the length of \( \mu \).
  On the other hand, Theorem~\ref{thm:skew_Omega_P} yields
  \[
    e_k^\perp \omega X_P(\xx;q,\beta) = \langle \ee_k^P(\uu) \Omega^P(\xx,\uu), \gamma_{\beta} \rangle.
  \]
  Since \( e_k^P(\uu) \) is the generating function for chains of \( P \) of length
  \( k \), if \( \ww\in W(\beta) \) contributes to \eqref{eq:e_perp_XP_2},
  then the \( k \)-prefix of \( \ww \) should be a \( P \)-decreasing word,
  i.e., \( \ww_1>_P \cdots >_P \ww_k \).
  In addition, the \( P \)-inversion number of such words can be computed as
  \[
    \inv_P(\ww) = \deg_P(T, \beta-\beta_T) + \inv_P(\ww_{k+1}\cdots\ww_d),
  \]
  where \( T=\{\ww_1,\dots,\ww_k\} \), which forms a chain in \( P \). Then we have
  \begin{align}
    \langle \ee_k^P(\uu) \Omega^P(\xx,\uu), \gamma_{\beta} \rangle
      &= \sum_{T\in C_k(P)} q^{\deg_P(T,\beta-\beta_T)} \langle \Omega^P(\xx,\uu), \gamma_{\beta-\beta_T} \rangle \nonumber \\
      &= \sum_{T\in C_k(P)} q^{\deg_P(T,\beta-\beta_T)} \omega X_P(\xx;q,\beta-\beta_T) \nonumber \\
      &= \sum_{\nu\vdash d-k} \sum_{T\in C_k(P)} q^{\deg_P(T,\beta-\beta_T)} c^{P,\beta-\beta_T}_\nu(q) h_\nu(\xx). \label{eq:e_perp_XP_2}
  \end{align}
  Therefore, equating \eqref{eq:e_perp_XP_1} and \eqref{eq:e_perp_XP_2}, and
  comparing the coefficients give the proof.
\end{proof}
\begin{exam}
  Let \( P \) be the poset on \( [5] \) such that for \( i,j\in P \), \( i<_P j \)
  if and only if \( j-i \ge 2 \). It is easy to see that \( P \) is a natural
  unit interval order. Let \( \beta=(1,1,2,1,1) \), and \( k=2 \).
  Then the chromatic quasisymmetric function is given by
  \begin{align*}
    \omega X_P(\xx;q,\beta) &= q^{3}h_{3,2,1} + \left(q^{4} + q^{3} + q^{2}\right)h_{3,3} + \left(q^{4} + q^{3} + q^{2}\right)h_{4,1,1} \\
    &\qquad + \left(q^{4} + 2 q^{3} + q^{2}\right)h_{4,2} + \left(2 q^{5} + 3 q^{4} + 3 q^{3} + 3 q^{2} + 2 q\right)h_{5,1}  \\
    &\qquad + \left(q^{6} + 2 q^{5} + 2 q^{4} + 2 q^{3} + 2 q^{2} + 2 q + 1\right)h_{6}.
  \end{align*}
  Choose \( \lambda=(3,1) \), and let us verify Theorem~\ref{thm:recur_e_perp}.
  To compute the left-hand side of \eqref{eq:recur_e_perp}, we need to find
  appropriate pairs \( (\mu, S) \): \(((4,2), \{1,2\})\), \(((4,1,1), \{1,2\})\), 
  \(((4,1,1), \{1,3\})\), \(((3,2,1), \{2,3\})\), \( ((2,1,1,1),\{2,3\}) \),
  \( ((2,1,1,1),\{2,4\}) \) and \( ((2,1,1,1),\{3,4\}) \).
  Then we have
  \begin{align*}
    \mbox{(LHS)}
      &= c^{P,\beta}_{42}(q) + 2 c^{P,\beta}_{411}(q) + c^{P,\beta}_{321}(q) + 
          3 c^{P,\beta}_{2111}(q) \\
      &= 3q^4 + 5q^3 + 3q^2.
  \end{align*}
  For computing the right-hand side, we first find chains of length 2.
  In our case, by definition, \( \{i,j\} \) forms a chain if and only if \( j-i\ge 2 \).
  For each chain \( T\in C_2(P) \), we can compute \( \deg_P(T, \beta-\beta_T) \)
  and \( c^{P, \beta-\beta_T}_{31}(q) \) as follows:
  \[
    \begin{tabular}{|c|c|c|}
      \hline  
      \(T\) & \(\deg_P(T,\beta-\beta_T)\) & \(c^{P,\beta-\beta_T}_{31}(q)\) \\
      \hline\hline
      \( 1,3 \) & \( 2 \) & \( q^2 + q \) \\
      \hline
      \( 1,4 \) & \( 2 \) & \( q^2 + q + 1 \) \\
      \hline
      \( 1,5 \) & \( 1 \) & \( q^2 \) \\
      \hline
      \( 2,4 \) & \( 3 \) & \( 0 \) \\
      \hline
      \( 2,5 \) & \( 2 \) & \( q^2 + q + 1 \) \\
      \hline
      \( 3,5 \) & \( 1 \) & \( q^2 + q \) \\
      \hline
    \end{tabular}
  \]
  Then we obtain
  \begin{align*}
    \mbox{(RHS)}
      &= \sum_{T\in C_2(P)} q^{\deg_P(T, \beta-\beta_T)} c^{P,\beta-\beta_T}_{31}(q) \\
      &= 3q^4 + 5q^3 + 3q^2,
  \end{align*}
  which justifies the theorem.
\end{exam}
Once one choose \( k \) and \( \lambda \) carefully in the theorem,
the left-hand side of \eqref{eq:recur_e_perp} could consist of a single term.
Such cases coincide with Harada--Precup's recurrence relation.
To be precise, let \( h_P \) be the length of a longest chain in \( P \).
(This value is called the height, or the bounce number of \( P \).)
Then, by definition, \( \ee_k^P(\uu)=0 \) for \( k > h_P \). Using this,
one can easily see that \( c^{P, \beta}_\lambda(q)=0 \) for partitions
\( \lambda \) with \( \ell(\lambda)> h_P \).
We now choose a partition \( \mu \) whose length is \( h_P \). Let \( k=h_P \),
and \( \lambda=(\lambda_1,\dots,\lambda_k) \) such that \( \lambda_i = \mu_i-1 \)
for each \( i \).
In this case, \eqref{eq:recur_e_perp} becomes
\begin{equation} \label{eq:Harada--Precup}
  c^{P,\beta}_\mu(q)
    = \sum_{T\in C_k(P)} q^{\deg_P(T,\beta-\beta_T)} c^{P,\beta-\beta_T}_\lambda(q),
\end{equation}
which is the Harada--Precup recurrence.
In \cite{HP19}, Harada and Precup showed the recurrence~\eqref{eq:Harada--Precup}
for the case \( h_P=2 \) using geometry of Hessenberg varieties,
and conjectured for any natural unit interval order.
In \cite{Hwa22}, the author proved the conjecture using the noncommutative symmetric
function theory.

Next, let us skew the chromatic quasisymmetric function by the power sum symmetric
function \( p_k(\xx) \).
Before applying Theorem~\ref{thm:skew_Omega_P}, we need to define a \( P \)-analogue
of the noncommutative power sum symmetric function \( \pp_k(\uu) \), and find a
monomial expression for it. As before, for \( k\ge 1 \), define
\[
  \pp_k^P(\uu) = \ee^P_1(\uu) \hh^P_{k-1}(\uu) - 2\ee^P_2(\uu) \hh^P_{k-2}(\uu)
                + \cdots + (-1)^{k-1} k \ee^P_k(\uu) \in \UU,
\]
so that \( \pp_k^P(\uu) = \phi^P(p_k(\xx)) \) in \( \UU/\II_P \).
To give a monomial expression of \( \pp_k(\uu) \), we need additional notions.
For a word \( \ww=\ww_1\cdots\ww_d \) on \( [N] \),
we say that \emph{\( \ww \) has no \( P \)-descents}
if for each \(1 \le i \le d-1 \), \( \ww_i \ngtr_P \ww_{i+1} \).
We also say that \emph{$\ww$ has no nontrivial
left-to-right $P$-maxima} if for each $2\le i\le d$,
there is an integer $j<i$ such that $\ww_i \ngtr_P \ww_j$.
Let \( \calN_{P, k} \) be the set of words \( \ww \) of length \( k \) on \( [N] \)
such that \( \ww \) has no \( P \)-descents and no nontrivial left-to-right
\( P \)-maxima.
\begin{prop}[{\cite{SW16,Hwa22}}] \label{prop:pp_k=N}
  For \( k\ge 1 \), we have
  \[
    \pp^P_k(\uu) \equiv \sum_{\ww\in\calN_{P,k}} u_\ww \mod \II_P.
  \]
\end{prop}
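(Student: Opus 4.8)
The plan is to follow the template laid out after Theorem~\ref{thm:Omega=gf}: the asserted formula is exactly a monomial expression for $\pp_k^P(\uu)$ modulo $\II_P$, so I would first compute $\pp_k^P(\uu)$ honestly inside $\UU$ and then reduce modulo $\II_P$. For the first step, recall $\pp_k^P(\uu)=\sum_{j=1}^{k}(-1)^{j-1}j\,\ee_j^P(\uu)\hh_{k-j}^P(\uu)$; combining the monomial formula $\ee_j^P(\uu)=\sum u_{i_1}\cdots u_{i_j}$ over strict $P$-chains $i_1>_P\cdots>_P i_j$ with $\hh_m^P(\uu)=\sum u_{i_1}\cdots u_{i_m}$ over words with no $P$-descent, the product $\ee_j^P(\uu)\hh_{k-j}^P(\uu)$ is the sum of $u_\ww$ over length-$k$ words $\ww$ whose first $j$ letters form a strict $P$-chain and which carry no $P$-descent at any position $\ge j+1$. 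Hence $\pp_k^P(\uu)=\sum_\ww c(\ww)\,u_\ww$ with $c(\ww)=\sum_j(-1)^{j-1}j$ over the qualifying $j$. Letting $p(\ww)$ be the length of the maximal strictly $P$-decreasing prefix of $\ww$ and $\delta(\ww)$ the position of the last $P$-descent of $\ww$ (or $0$ if none), the qualifying $j$ form the integer interval $[\max(1,\delta(\ww)),\,p(\ww)]$; since positions $1,\dots,p(\ww)-1$ are automatically $P$-descents and $\delta(\ww)$ cannot equal $p(\ww)$, this interval has $0$, $1$, or $2$ elements, and evaluating $\sum_{j=a}^{b}(-1)^{j-1}j$ gives $c(\ww)=1$ when $\ww$ has no $P$-descent, $c(\ww)=(-1)^{p(\ww)-1}$ when $p(\ww)\ge2$ and every $P$-descent of $\ww$ lies in its first $p(\ww)-1$ positions, and $c(\ww)=0$ otherwise. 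Writing $B_\ell=\{\,c_1\cdots c_\ell\, w : c_1>_P\cdots>_P c_\ell,\ c_\ell\, w\text{ has no }P\text{-descent}\,\}$ (so $B_1$ is all words with no $P$-descent), we arrive at the identity $\pp_k^P(\uu)=\sum_{\ell\ge1}(-1)^{\ell-1}\sum_{\ww\in B_\ell}u_\ww$ in $\UU$.

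For the second step, note that the generators \eqref{rel:ideal_P_1}--\eqref{rel:ideal_P_2} of $\II_P$ are \emph{binomials} in the $u_\ww$; hence $\II_P$ is the span of the differences $u_\ww-u_{\ww'}$ over $\II_P$-equivalent words $\ww\sim\ww'$, and $\UU/\II_P$ has a basis indexed by the $\II_P$-equivalence classes $C$ of words, each class contributing one element $[u_C]$. So the congruence to be proved is equivalent to the numerical statement $\sum_{\ww\in C}c(\ww)=|\,C\cap\calN_{P,k}\,|$ for every class $C$ of length-$k$ words; and since every word of $\calN_{P,k}$ has $c=1$, this in turn reduces to the sign cancellation
\[
  |\,C\cap L\,|+\sum_{\ell\ge2}(-1)^{\ell-1}\,|\,C\cap B_\ell\,|=0
\]
for each $C$, where $L$ denotes the set of words with no $P$-descent but a nontrivial left-to-right $P$-maximum.

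The third and decisive step is to prove this cancellation by a sign-reversing involution on $(C\cap L)\sqcup\bigsqcup_{\ell\ge2}(C\cap B_\ell)$, assembled from the moves \eqref{rel:ideal_P_1}--\eqref{rel:ideal_P_2} (which all preserve the class $C$). One ingredient is a map $L\to B_2$: given $\ww\in L$, move the letter sitting at the first nontrivial left-to-right $P$-maximum of $\ww$ to the front; using both defining axioms of a natural unit interval order one verifies that the resulting word acquires a single $P$-descent, at position $1$, hence lies in $B_2$, and that the transpositions needed for the move are exactly instances of \eqref{rel:ideal_P_1}--\eqref{rel:ideal_P_2}, so it stays inside $C$. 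The other ingredient must match the $B_\ell$'s among themselves, lowering $\ell$ by one by the reverse manoeuvre — reinserting the top of a strict $P$-chain back into the no-$P$-descent suffix. The hard part will be to make these two matchings simultaneously well defined, mutually inverse, and exhaustive on each class: the subtlety is deciding \emph{which} position the displaced record letter has to be reinserted at so that the composite is a genuine involution, and it is precisely here that the exact shape of \eqref{rel:ideal_P_2} and both natural-unit-interval axioms are needed. Should this bookkeeping prove too heavy, one can instead invoke the monomial expressions for noncommutative $P$-symmetric functions already established in \cite{Hwa22} (and, at $q=1$, in \cite{SW16}), which give the claim outright; or argue by induction on $k$ from Newton's identity $\pp_k^P\equiv\sum_{i=1}^{k-1}(-1)^{i-1}\ee_i^P\pp_{k-i}^P+(-1)^{k-1}k\,\ee_k^P$, expanding $\ee_i^P(\uu)\bigl(\sum_{\vv\in\calN_{P,k-i}}u_\vv\bigr)$ and checking the resulting telescoping class by class.
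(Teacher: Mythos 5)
This proposition is not proved in the paper at all: it is imported verbatim from \cite{SW16,Hwa22} (see also the remark immediately after it, which points to Remark~4.13 of \cite{Hwa22} for the generality in which it holds). So had you simply cited those sources you would have matched the paper exactly. Your attempt at an actual proof is set up correctly as far as it goes: the expansion of \( \ee_j^P(\uu)\hh_{k-j}^P(\uu) \), the computation of the coefficient \( c(\ww) \) via the interval \( [\max(1,\delta(\ww)),p(\ww)] \), the resulting identity \( \pp_k^P(\uu)=\sum_{\ell\ge1}(-1)^{\ell-1}\sum_{\ww\in B_\ell}u_\ww \) in \( \UU \), and the observation that, since the generators \eqref{rel:ideal_P_1}--\eqref{rel:ideal_P_2} are binomials, the congruence is equivalent to the per-class count \( \sum_{\ww\in C}c(\ww)=|C\cap\calN_{P,k}| \) are all sound.

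The gap is the decisive third step, which you do not carry out. The easy half, the map \( L\to B_2 \), indeed works (and in fact needs only \eqref{rel:ideal_P_1} together with transitivity: if \( \ww_{i-1}<_P\ww_i \) and \( \ww_{i-1}>_P\ww_{i+1} \) then \( \ww_i>_P\ww_{i+1} \), contradicting the absence of a \( P \)-descent at \( i \)); but the required statement is the full alternating cancellation \( |C\cap L|-|C\cap B_2|+|C\cap B_3|-\cdots=0 \) for every class \( C \), and the matching \( B_\ell\leftrightarrow B_{\ell+1} \) (reinsert the top of the chain into the no-descent suffix) is not inverse to ``extract the first nontrivial left-to-right \( P \)-maximum'' without further work: the reinserted letter need not become the \emph{first} such record of the new word, and choosing the insertion position so that the two moves are mutually inverse, class-preserving, and exhaustive is precisely the nontrivial content of the cited proofs. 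You acknowledge this (``the hard part will be\dots''), and your fallbacks do not close it: invoking \cite{Hwa22} (or \cite{SW16} at \( q=1 \)) is a citation rather than a proof, and the Newton-identity induction re-encounters exactly the same unproved telescoping. So as a standalone argument the proposal is incomplete at its central step, even though its reduction and its fallback citation are consistent with how the paper itself treats the statement.
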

Note that the proposition holds for any poset with an ideal containing
\eqref{rel:ideal_P_1} in place of \( \II_P \); see \cite[Remark~4.13]{Hwa22}.
We also remark that \( \pp^P_k(\uu)=\phi^P(p_k(\xx)) = \phi^P(m_k(\xx)) \)
so that we have
\begin{equation} \label{eq:pp_k=c_k}
  \langle \pp^P_k(\uu), \gamma_{\beta} \rangle = c^{P, \beta}_k(q),
\end{equation}
the coefficient of \( h_k(\xx) \) of \( \omega X_P(\xx;q,\beta) \),
because the two bases \( \{m_\lambda(\xx)\}_{\lambda\in\Par} \)
and \( \{h_\lambda(\xx)\}_{\lambda\in\Par} \) are dual to each other.

For \( \alpha\in\NN^N \), define the multiset
\( S_\alpha=\{1^{\alpha_1},\dots,N^{\alpha_N}\} \).
For \( \alpha,\beta \in \NN^N \), we write \( \alpha \le \beta \) if
\( \alpha_i \le \beta_i \) for all \( i\in[N] \).
\begin{thm} \label{thm:recur_p_perp}
  Let \( P \) be a natural unit interval order, \( \beta\in\NN^N \), and
  \( d=\beta_1+\dots+\beta_N \).
  For an integer \( k\ge 1 \) and a partition \( \lambda\vdash d-k \), we have
  \begin{equation} \label{eq:recur_p_perp}
    \sum_{(\mu, i)} c^{P,\beta}_{\mu}(q)
    = \sum_{\substack{\alpha\le \beta \\ \alpha_i+\dots+\alpha_N=k}}
    q^{\deg_P(S_\alpha, \beta-\alpha)} c^{P,\alpha}_k(q,\alpha) c^{P,\beta-\alpha}_\lambda(q),
  \end{equation}
  where \( (\mu, i) \) is a pair of a partition \( \mu\vdash d \) and
  \( 1\le i\le \ell(\mu) \) such that \( \mu_i\ge k \) and \( \mu-k \epsilon_i \)
  equals \( \lambda \), after reordering in weakly decreasing order.
\end{thm}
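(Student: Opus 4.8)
The plan is to compute \( p_k^\perp \omega X_P(\xx;q,\beta) \) in two different ways, mirroring the proof of Theorem~\ref{thm:recur_e_perp}. First, by applying \( p_k^\perp \) to the expansion~\eqref{eq:XP=ch} and invoking Proposition~\ref{prop:p_perp_h}, we obtain
\[
  p_k^\perp \omega X_P(\xx;q,\beta)
  = \sum_{\mu\vdash d} c^{P,\beta}_\mu(q) \sum_{i=1}^{\ell(\mu)} h_{\mu-k\epsilon_i}(\xx),
\]
so that the coefficient of \( h_\lambda(\xx) \) on this side is precisely \( \sum_{(\mu,i)} c^{P,\beta}_\mu(q) \), the left-hand side of~\eqref{eq:recur_p_perp}. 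Second, by Theorem~\ref{thm:skew_Omega_P} applied with \( f(\xx)=p_k(\xx) \), we have
\[
  p_k^\perp \omega X_P(\xx;q,\beta) = \langle \pp^P_k(\uu)\,\Omega^P(\xx,\uu), \gamma_\beta \rangle.
\]
The remaining task is to evaluate the right-hand side and extract the coefficient of \( h_\lambda(\xx) \).

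Next I would use the monomial expression \( \pp^P_k(\uu) \equiv \sum_{\ww\in\calN_{P,k}} u_\ww \bmod \II_P \) from Proposition~\ref{prop:pp_k=N}. Pairing with \( \gamma_\beta = \sum_{\ww\in W(\beta)} q^{\inv_P(\ww)}\ww \), one sees that a word in \( W(\beta) \) contributes only if its length-\(k\) prefix lies in \( \calN_{P,k} \). For such a word \( \ww \), write the prefix as \( \ww' \) with content \( \alpha \); necessarily \( \alpha\le\beta \), and because a word with no nontrivial left-to-right \( P \)-maxima and no \( P \)-descents can only use letters forming a down-set under the relevant structure, the condition \( \alpha_i+\dots+\alpha_N = k \) emerges (the prefix uses exactly the top part of the support, matching the \( \mu_i\ge k \) bookkeeping on the other side). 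The key combinatorial step is the factorization of the \( P \)-inversion statistic: for a word \( \ww \) whose prefix \( \ww' \) has content \( \alpha \) and whose suffix \( \ww'' \) has content \( \beta-\alpha \),
\[
  \inv_P(\ww) = \inv_P(\ww') + \deg_P(S_\alpha, \beta-\alpha) + \inv_P(\ww''),
\]
where the cross term counts inversions between the prefix letters and the suffix letters, which equals \( \deg_P(S_\alpha,\beta-\alpha) \) because every letter of the prefix precedes every letter of the suffix (the cross-inversion count depends only on the two contents, not on the arrangement). This is the analogue of the identity \( \inv_P(\ww) = \deg_P(T,\beta-\beta_T) + \inv_P(\ww_{k+1}\cdots\ww_d) \) used in the proof of Theorem~\ref{thm:recur_e_perp}, now with a multiset \( S_\alpha \) in place of a chain.

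Then I would sum over the possibilities: first sum over the suffix \( \ww'' \) ranging over \( W(\beta-\alpha) \), which produces \( \langle \Omega^P(\xx,\uu), \gamma_{\beta-\alpha}\rangle = \omega X_P(\xx;q,\beta-\alpha) \); and separately sum the \( q^{\inv_P(\ww')} \) over prefixes \( \ww'\in\calN_{P,k} \) of content \( \alpha \), which by~\eqref{eq:pp_k=c_k} (applied to the poset restricted appropriately, or read off from \( \langle\pp^P_k(\uu),\gamma_\alpha\rangle \)) is exactly \( c^{P,\alpha}_k(q,\alpha) \). Collecting terms gives
\[
  p_k^\perp \omega X_P(\xx;q,\beta)
  = \sum_{\substack{\alpha\le\beta\\ \alpha_i+\dots+\alpha_N=k}}
  q^{\deg_P(S_\alpha,\beta-\alpha)} c^{P,\alpha}_k(q,\alpha)\,\omega X_P(\xx;q,\beta-\alpha),
\]
and expanding \( \omega X_P(\xx;q,\beta-\alpha) = \sum_\nu c^{P,\beta-\alpha}_\nu(q) h_\nu(\xx) \) and comparing the coefficient of \( h_\lambda(\xx) \) on both computations yields~\eqref{eq:recur_p_perp}. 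The main obstacle I anticipate is pinning down the index set on the right-hand side precisely — that is, verifying that the constraint ``prefix in \( \calN_{P,k} \) of content \( \alpha \)'' translates exactly into ``\( \alpha\le\beta \) with \( \alpha_i+\dots+\alpha_N=k \)'' with the correct matching to the pairs \( (\mu,i) \) on the left, and checking that the cross-term \( P \)-inversion count is genuinely \( \deg_P(S_\alpha,\beta-\alpha) \) independent of how the prefix and suffix are interleaved internally (it is a clean separation only because every prefix position lies to the left of every suffix position). Once that bookkeeping is settled, the rest is a routine matching of coefficients.
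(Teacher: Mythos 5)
Your proposal follows essentially the same route as the paper: compute \( p_k^\perp \omega X_P(\xx;q,\beta) \) once via Proposition~\ref{prop:p_perp_h} and once via Theorem~\ref{thm:skew_Omega_P}, split each word of \( W(\beta) \) into its length-\( k \) prefix and suffix, factor \( \inv_P \) with the cross term \( \deg_P(S_\alpha,\beta-\alpha) \) (which depends only on the two contents), identify the prefix sum with \( c^{P,\alpha}_k(q) \) via \eqref{eq:pp_k=c_k}, and compare \( h \)-coefficients. One small correction: the summation condition should simply read \( \alpha_1+\dots+\alpha_N=k \) (the prefix has length \( k \)), so there is no ``down-set/top part of the support'' constraint to verify as you suggest --- contents \( \alpha \) admitting no prefix in \( \calN_{P,k} \) just contribute \( c^{P,\alpha}_k(q)=0 \), and the \( \mu_i\ge k \) condition on the left is unrelated bookkeeping ensuring \( \mu-k\epsilon_i \) is a partition.
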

\begin{proof}
  Similar to the proof of Theorem~\ref{thm:recur_e_perp}, we will establish the
  theorem by evaluating \( p_k^\perp \omega X_P(\xx;q,\beta) \) through two
  different ways, and then equating the results.

  First, applying the skewing operator \( p_k^\perp \) to \eqref{eq:XP=ch},
  we have
  \begin{align*}
    p_k^\perp \omega X_P(\xx;q,\beta)
      &= \sum_\mu c^{P,\beta}_\mu p_k^\perp h_\mu(\xx) \\
      &= \sum_\mu c^{P,\beta}_\mu \sum_{i=1}^{\ell(\mu)} h_{\mu-k \epsilon_i}(\xx),
  \end{align*}
  by Proposition~\ref{prop:p_perp_h}.
  On the other hand, splitting each word \( \ww\in W(\beta) \) into
  \( \ww_1\cdots\ww_k \) and \( \ww_{k+1}\cdots\ww_d \),
  we can identify \( W(\beta) \) with
  \[
    \bigsqcup_{\substack{\alpha\le \beta \\ \alpha_1+\dots+\alpha_N=k}}
      W(\alpha) \times W(\beta-\alpha).
  \]
  In addition, for \( \ww\in W(\alpha)\times W(\beta-\alpha) \), one can easily see
  \[
    \inv_P(\ww) = \inv_P(\ww_1\cdots\ww_k) + \inv_P(\ww_{k+1}\cdots\ww_d)
      + \deg_P(S_\alpha, \beta-\alpha).
  \]
  Therefore we deduce
  \begin{align*}
    p_k^\perp \omega X_P(\xx;q,\beta)
      &= \langle \pp^P_k(\uu) \Omega^P(\xx,\uu), \gamma_\beta \rangle && \mbox{by Theorem~\ref{thm:skew_Omega_P}}, \\
      &= \sum_{\substack{\alpha\le \beta \\ \alpha_1+\dots+\alpha_N=k}}
          q^{\deg_P(S_\alpha, \beta-\alpha)}
          \langle \pp^P_k(\uu), \gamma_\alpha \rangle ~
          \langle \Omega^P(\xx,\uu), \gamma_{\beta-\alpha} \rangle \\
      &= \sum_{\substack{\alpha\le \beta \\ \alpha_1+\dots+\alpha_N=k}}
      q^{\deg_P(S_\alpha, \beta-\alpha)} c^{P,\alpha}_k(q)
      \langle \Omega^P(\xx,\uu), \gamma_{\beta-\alpha} \rangle && \mbox{by \eqref{eq:pp_k=c_k},}  \\
      &= \sum_{\substack{\alpha\le \beta \\ \alpha_1+\dots+\alpha_N=k}}
        \sum_{\nu\vdash d-k}
        q^{\deg_P(S_\alpha, \beta-\alpha)} c^{P,\alpha}_k(q)
        c^{P,\beta-\alpha}_\nu(q) h_\nu(\xx).
  \end{align*}
  Equating the two results completes the proof.
\end{proof}
\begin{exam}
  Continuing the previous example, let \( P \) and \( \beta \) be the ones
  in the example. We also let \( \lambda=(3,1) \) and \( k=2 \).
  The following is a list of pairs \( (\mu,i) \) satisfying the conditions in the
  theorem:
  \(((5,1), 1)\), \(((3,3), 1)\), \(((3,3), 2)\), and \(((3,2,1), 2)\).
  We then have
  \begin{align*}
    \mbox{(LHS)} 
      &= c^{P,\beta}_{51}(q) + 2c^{P,\beta}_{33}(q) + c^{P,\beta}_{321}(q) \\
      &= 2 q^{5} + 5 q^{4} + 6 q^{3} + 5 q^{2} + 2 q.
  \end{align*}
  On the other hand, there are many \( \alpha\le \beta \) with
  \( \alpha_1+\dots+\alpha_5=2 \), and for some such \( \alpha \),
  the value \( c^{P,\alpha}_2(q,\alpha) \) is equal to \( 0 \). Thus we only present
  \( \alpha \) with nonzero \( c^{P,\alpha}_2(q,\alpha) \):
  \[
    \begin{tabular}{|c|c|c|c|}
      \hline  
      \(\alpha\) & \( \deg_P(S_\alpha,\beta-\alpha) \) & \( c^{P,\alpha}_2(q,\alpha) \) & \(c^{P,\beta-\alpha}_{31}(q)\) \\
      \hline\hline
      \( (1,1,0,0,0) \) & 2 & \( q+1 \) & \( q^2 + q \) \\
      \hline
      \( (0,1,1,0,0) \) & 2 & \( q+1 \) & \( q^2 + q + 1 \) \\
      \hline
      \( (0,0,2,0,0) \) & 2 & \( 1 \) & \( 0 \) \\
      \hline
      \( (0,0,1,1,0) \) & 1 & \( q+1 \) & \( q^2 + q + 1 \) \\
      \hline
      \( (0,0,0,1,1) \) & 0 & \( q+1 \) & \( q^2 + q \) \\
      \hline
    \end{tabular}
  \]
  Therefore, we can easily verify
  \[
    \mbox{(RHS)} =  2 q^{5} + 5 q^{4} + 6 q^{3} + 5 q^{2} + 2 q.
  \]
\end{exam}


\bibliographystyle{alpha}
\bibliography{skewing}

\end{document}